\pgfplotsset{every axis/.append style={
                    axis x line=middle,    % put the x axis in the middle
                    axis y line=middle,    % put the y axis in the middle
                    axis line style={-,color=blue}, % arrows on the axis
                    xlabel={$x$},          % default put x on x-axis
                    ylabel={$y$},          % default put y on y-axis
            }}
\pgfplotsset{compat=1.13}
\DeclareMathOperator{\Hom}{Hom}
\DeclareMathOperator{\Pic}{Pic}
\def\cS{{\mathcal S}}
\def\V{{\mathcal V}}
\def\ZZ{\mathbb{Z}}
\def\CC{\mathbb{C}}
\def\Q{\mathbb{Q}}
\def\QQ{\mathbb{Q}}
\def\cO{\mathcal{O}}
\def\div{\textrm{div}}
\def\ZK{Z_K}
\newtheorem{thm}{Theorem}[section]  %CHOOSE [chapter] or [section]
\newtheorem{prop}{Proposition}[section]
\newtheorem{cor}{Corollary}[section]
\newtheorem{def-lemma}{Definition-Lemma}[section]
\theoremstyle{remark}
\newtheorem{rem}{Remark}[section]
\theoremstyle{definition}
\newtheorem{exam}{Example}[section]
\let\c@lemma\c@thm
\let\c@prop\c@thm
\let\c@propdef\c@thm
\let\c@proper\c@thm
\let\c@problem\c@thm
\let\c@conj\c@thm
\let\c@cor\c@thm
\let\c@rem\c@thm
\let\c@dfn\c@thm
\let\c@notation\c@thm
\let\c@exam\c@thm
\title[Delta invariant and genus of representable semigroups]
{Delta invariant of $\mathbb{Q}$-Cartier curve germs and the genus of representable numerical semigroups}
\author[Zs. Baja]{Zsolt Baja}
\address{Babe\c{s}-Bolyai University, Faculty of Mathematics and Computer Science,\newline \hspace*{6mm} 
Str. Mihail Kog\u{a}lniceanu nr. 1, 400084 Cluj-Napoca, Romania}
\email{zsolt.baja@ubbcluj.ro}
\author[T. L\'aszl\'o]{Tam\'as L\'aszl\'o}
\address{Babe\c{s}-Bolyai University, Faculty of Mathematics and Computer Science,\newline \hspace*{6mm} 
Str. Mihail Kog\u{a}lniceanu nr. 1, 400084 Cluj-Napoca, Romania}
\email{tamas.laszlo@ubbcluj.ro}
\author[A. N\'emethi]{Andr\'as N\'emethi}
\address{Alfr\'ed R\'enyi Inst. of Math.,  %\newline \hspace*{3mm}
Re\'altanoda utca 13-15, H-1053, Budapest, Hungary \newline \hspace*{3mm}
ELTE, % - Univ. of Budapest, Dept. of Geo. \newline \hspace*{3mm}
P\'azm\'any P\'eter s\'et\'any 1/A, 1117, Budapest, Hungary \newline \hspace*{3mm}
Babe\c{s}-Bolyai Univ., % Faculty of Math. and Comp. Sci. \newline \hspace*{3mm}
str. M. Kog\u{a}lniceanu 1, 400084 Cluj-Napoca, Romania \newline \hspace*{3mm}
BCAM, % - Basque Center for Applied Math.\newline \hspace*{3mm}
Mazarredo, 14 E48009 Bilbao, Basque Country, Spain}
\email{nemethi.andras@renyi.hu}
\subjclass[2020]{Primary. 14B05, 20MXX; Secondary. 14H20}
\keywords{genus of numerical semigroups, weighted homogeneous surface germs, delta invariant of curves, normal surface singularities}
\thanks{All the authors are partially supported by NKFIH Grant `\'Elvonal (Frontier)' KKP 144148.  T.L. is supported by the `J\'anos Bolyai Research Scholarship' of the Hungarian Academy of Sciences. Zs.B. and T.L. acknowledge  the support of the project `Singularities and Applications' - CF 132/31.07.2023 funded by the European Union - NextGenerationEU - through Romania's National Recovery and Resilience Plan.}
\begin{document}

\begin{abstract}
In this article, first we give two formulae for the delta invariant of a complex curve singularity that can be embedded as a ${\mathbb Q}$-Cartier divisor in a normal surface singularity with rational homology sphere link. Next, we consider representable numerical semigroups, they are semigroups associated with normal weighted homogeneous surface singularities with rational homology sphere links (via the degrees of the homogeneous functions). We then prove that such a  semigroup can be interpreted as the value semigroup of a generic orbit (as a curve singularity)  given by the $\mathbb{C}^*$-action on the weighted homogeneous germ. Furthermore, we use the delta invariant formula to derive a combinatorially computable formula for the genus of representable semigroups. Finally, we 
characterize topologically those  representable semigroups which are  symmetric.
\end{abstract}

\maketitle

\section{Introduction}
\subsection{}
Numerical semigroups play a crucial role in various areas of the theory of complex curve and surface singularities. A typical example is when the finite number of gaps in a numerical semigroup provides a combinatorial interpretation of an analytic or topological invariant of the singularity (this number being the genus of the semigroup). One example is the delta invariant of an irreducible curve singularity, which is the genus of the value semigroup associated with the curve. 
While the semigroup interpretation of singularity invariants can be very valuable, the efficient computation of the main  characteristics of numerical semigroups  (like the 
Frobenius number, genus, etc.) is generally not a simple task. 
 In fact, it is generally rather difficult to find closed formulae.

In this context, the second and third authors \cite{stronglyflat} have recently proposed a 
research program which aims to link the theory of numerical semigroups and  surface singularities. 
Specifically, we consider a normal, weighted homogeneous surface germ and assume that its link is a rational homology sphere ($\mathbb{Q}HS^3$). We associate with this germ a numerical semigroup, $\mathcal{S}$, 
formed by the degrees of  the homogeneous elements  of the coordinate ring of functions, see \ref{ss:4.2}. Since the link is $\mathbb{Q}HS^3$, a result of Pinkham \cite{pinkham} implies that $\mathcal{S}$ is a topological invariant, it is computable from  the corresponding Seifert structure encoded by the canonical equivariant resolution graph. 
We say that a numerical semigroup is {\it representable}
if it can be associated with such a resolution graph in this way.

An important natural questions asks 
whether every  numerical semigroup can be represented in this way, or if not, what 
are the properties which obstruct the representability. 
Though this problem is still open,  the first and second authors characterized the representable semigroups as quotients of a special class of numerical semigroups, called the flat semigroups, see \cite{flat}. This is in some ways `analogue' to the fact that every numerical semigroup can be written as a quotient of a symmetric semigroup (cf. \cite{RGS,S}).  

The importance of the above-mentioned representability problem lies in the fact that various invariants of the representable numerical semigroups can be calculated or interpreted using methods from singularity theory,
thus providing answers to important questions originally  raised in the theory of numerical semigroups. 
For example, \cite{stronglyflat} gives an affirmative answer to the Frobenius problem by providing a closed formula for the Frobenius number of a representable semigroup.

The  present article provides a new step of this program.
We give a formula for the genus of a representable semigroup, which we identified with  the delta invariant of the generic orbit curve singularity that appears on the weighted homogeneous surface singularity. 

In the sequel we  summarize the results of this paper.

\subsection{}
In terms of results, the article can be divided into two parts. 

Section 3, as the first part of the article, is a generalization of the previous work of \cite{delta1,delta2}. Namely, in Theorem \ref{thm:delta} we give a formula (\ref{eq:delta1}) for the delta invariant of a curve singularity that can be embedded as a $\mathbb{Q}$-Cartier divisor in a normal surface singularity with $\mathbb{Q}HS^3$ link. This formula shows how the delta invariant depends on the embedded topology and the analytic type of the ambient surface. 

With the additional assumption that the ambient space  $(X,o)$ is $\mathbb{Q}$-Gorenstein,
we deduce formula (\ref{eq:delta2}), which in the second part 
will be the key ingredient in the study of representable semigroups.
%to the second part of the article. 
%Here, a special cycle emerges as the dominant feature of the embedded topological data (see Remark \ref{rem:delta}(2)), also playing a significant role in the calculation of the Frobenius number of representable semigroups, cf. \cite{stronglyflat}. 
In order to prove this second  formula, we need to establish some new  results 
valid for  $\mathbb{Q}$-Gorenstein normal surface singularities with $\mathbb{Q}HS^3$ link. A vanishing theorem is provided in Proposition \ref{thm:h1} and a duality result in Proposition \ref{prop:dual}, which establishes an intriguing duality property (Corollary \ref{cor:3.5}) for the equivariant geometric genus of such germs. 

The second part of the article begins with Section 4.
First, we briefly introduce the reader to the problem of representable semigroups. Then, in Theorem \ref{mainsmgps}, we prove that a representable numerical semigroup can be interpreted as the value semigroup of a  generic orbit curve singularity defined by the $\mathbb{C}^*$-action of the corresponding weighted homogeneous surface singularity. 
Then the results of the two parts combined provide the 
%Together with (\ref{eq:delta2}), this gives Corollary \ref{cor:4.3}, which consists of the 
desired formula for the genus of a representable semigroup. 

We conclude the article in Section 5, where (based on the just proved genus formula and an  earlier 
expression of the Frobenius number)  we characterize topologically 
(using the Seifert invariants of the link of the weightd homogeneous singularity) 
those representable semigroups which are symmetric.

\section{Preliminaries}\label{s:prel}

\subsection{Lattices associated with a resolution}
\label{sec:lattice}
We consider a complex normal surface singularity $(X,o)$ and one of its good resolutions $\pi:\tilde X\to X$ with dual resolution graph $\Gamma$ whose set of vertices is denoted by $\V$. Let $\{E_v\}_{v\in \V}$ be the irreducible components
of the exceptional set $\pi^{-1}(o)$. {\it We assume that the link $\Sigma$ is a 
rational homology sphere, $\mathbb{Q}HS^3$}, i.e.
$\Gamma$ is a connected tree and all $E_v$ are rational.

We define the lattice of integral cycles $L$ as  $H_2(\tilde X,\ZZ)$, endowed with the non-degenerate negative definite intersection form $(,)$. It is generated by the (classes of the) exceptional divisors $E_v$, $v\in \V$, that is, $L=\oplus_{v\in \V} \ZZ\langle E_v \rangle$. Note that in the homology exact sequence of the pair $(\tilde X, \Sigma)$ (where $\Sigma=\partial \tilde X$ is the link of the singularity) one has
$H_2(\Sigma,\ZZ)=0$, $H_1(\tilde X, \ZZ)=0$,  hence the exact sequence  has the form:
\begin{equation}
\label{eq:ses}
\array{ccccccccc}
0 & \to & L & \to & H_2(\tilde X,\Sigma,\ZZ) & \to & H_1(\Sigma,\ZZ) & \to & 0.\\
\endarray
\end{equation}
The dual lattice  $L':= \Hom(H_2(\tilde X,\ZZ),\ZZ)$ can be identified with $H_2(\tilde{X}, \Sigma, \ZZ)$ by the Lefschetz--Poincar\'e duality, hence \eqref{eq:ses} implies 
$L'/L\cong H_1(\Sigma,\ZZ)$. This finite group will be denoted by $H$ for simplicity. 

$L'$ embeds into the space of rational cycles $L_{{\mathbb Q}}:=
 L\otimes {\mathbb Q}$ and it can be identified  with $\{l'\in L_{{\mathbb Q}}\,:\, (l',L)_{{\mathbb Q}}\in \ZZ\}$, where $(\,,\,)_{{\mathbb Q}}$  is the extension of the intersection form to $L_{{\mathbb Q}}$. Hence, in the sequel we regard $L'$ as $\oplus_{v\in \V} \ZZ\langle E^*_v \rangle$,
 the lattice generated by the rational cycles  $E^*_v\in L_{{\QQ}}$,
$v\in \V$,  where $(E_u^*,E_v)_{{\QQ}}=-\delta_{u,v}$ (Kronecker delta) for any $u,v\in \V$.
 The inclusion $L\subset L'$  in the bases $\{E_v\}_v$ and $\{E_v^*\}_v$  is given by
$-M$, where $M$ is the intersection matrix of $L$, that is,
$E_v=-\sum_{u\in \V} (E_v,E_u) E_u^*$. Since $M$ is negative definite, the matrix  $-M^{-1}$
has  positive  entries, and the   $E^*_v$'s   are the columns of  $-M^{-1}$.

The dual base elements $E^*_v$, as rational divisors,  have the following
geometrical interpretation as well: consider $\gamma_v\subset \tilde X$ a curvette associated with $E_v$, that is, a smooth
irreducible curve germ in $\tilde X$ intersecting $E_v$ transversally. Then
$\pi^*\pi_*\gamma_v=\gamma_v+E_v^*$, where $\pi^*$ and $\pi_*$ are the natural divisorial operators. 

Let $K_{\tilde X}$ be the canonical divisor of  the smooth complex analytic surface $\tilde X$, 
and let $Z_K$  --- the {\it (anti-)canonical cycle} of $\pi$ --- be that rational cycle  supported on the exceptional locus which has the same numerical properties as $-K_{\tilde{X}}$.
Hence, it can be  determined combinatorially  by the
linear system of {\it adjunction formulae}
\begin{equation}
\label{eq:KX}
(Z_K,E_v)=(E_v,E_v)+2, \textrm{ for all } v\in \V.
\end{equation}
In particular, $Z_K\in L'$, and using \eqref{eq:KX} it can be also written as
\begin{equation}\label{eq:ZK}
\ZK-E=\sum_{v\in \V} (\delta_v-2) E^*_v.
\end{equation}
where $E=\sum_{v\in \V}E_v$ and $\delta_v$ is the valency of $v$ in $\Gamma$.

We say that $(X,o)$, or its topological type $\Sigma$, is \emph{numerically Gorenstein} if $Z_K\in L$. Note that this property is independent of the resolution and it  is equivalent with the topological triviality  of the line bundle $\Omega^2_{X\setminus o}$ of holomorphic $2$-forms on $X\setminus o$. $(X,o)$ is called \emph{Gorenstein} if $\Omega^2_{X\setminus o}$ is holomorphically trivial.
%, or equivalently, $K_X$ is a Cartier divisor at $o\in X$. 
More generally,  we say that $(X,o)$ is \emph{$\mathbb{Q}$-Gorenstein} if 
$(\Omega^2_{X\setminus o})^{\otimes r}$ is holomorphically trivial 
%Cartier at $o$ 
for some $r\in \mathbb{Z}_{>0}$. If $\Sigma$ is a $\mathbb{Q}HS^3$ then this property is equivalent with $\Omega^2_{\tilde X}=\cO_{\tilde X}(K_{\tilde X})\simeq \cO_{\tilde X}(-Z_K)$, where 
here $ \cO_{\tilde X}(-Z_K)$ denotes the 
`natural line bundle' associated with $(-Z_K)$.

Recall that for any $l'\in L'$ we can consider the unique line bundle ${\mathcal O}_{\tilde{X}}(l')$,
called the natural line bundle associated with $l'$, which has the property that 
$({\mathcal O}_{\tilde{X}}(l'))^{\otimes n}= {\mathcal O}_{\tilde{X}}(nl')$ for any 
$n$  with $nl'\in L$. (In the definition of the natural line bundle the fact that $\Sigma$ is a 
$\Q HS^3$ is necessary.)

\vspace{0.3cm}

For more regarding this section see eg. ~\cite{nbook}.

\subsection{Minimal \texorpdfstring{$H$}{H}--representatives and the generalized Laufer algorithm}\label{ss:HrepLip}
$L_{\QQ}$ has  a natural partial ordering defined coordinate-wise: 
for $l'_1,l'_2\in L_\QQ$ with $l'_i=\sum_v l'_{iv}E_v$ ($i=\{1,2\}$) one 
writes $l'_1\geq l'_2$  whenever  $l'_{1v}\geq l'_{2v}$ for all $v\in\V$. 
In particular, $l'$ is an effective rational cycle if $l'\geq 0$. We set also $\min\{l'_1,l'_2\}:= \sum_v\min\{l'_{1v},l'_{2v}\}E_v$.

Given an element $l'\in L'$ we denote by $[l']\in H$ its class in $H=L'/L$.
The lattice $L'$ admits a partition parametrized by the group $H$,  where for any $h\in H$ one sets 
$L'_h=\{l'\in L'\mid [l']=h\}\subset L'$. In particular, $L'_0=L$.
Given an $h\in H$ one can define the unique minimal representative $r_h$ of $h$ in 
$L'_h\cap L'_{\geq 0}$. By coordinates,    $r_h:=\sum_v l'_v E_v$ if and only if $[r_h]=h$   and 
$0\leq l'_v<1$ for avery $v$.

The set $\cS_\QQ:=\{l'\in L_\QQ \ | \ (l',E_v)\leq 0 \ \mbox{for all} \ v\in \V\}$ 
is a cone generated over $\QQ_{\geq 0}$ by $\{E^*_v\}_v$. Then $\cS':=\cS_\QQ\cap L'$ is a monoid of anti--nef rational cycles of $L'$ which is generated over $\mathbb{Z}_{\geq 0}$ by the cycles $E^*_v$. It is called the \emph{Lipman cone}. 
It  also admits a natural equivariant partition $\cS'_{h}=\cS'\cap L'_h$ indexed  by $H$. If $s_1,s_2\in \cS'_{h}$ then $\min\{s_1,s_2\}\in \cS'_h$. Furthermore, for any $h$ there exists a unique  \emph{minimal cycle} $s_h:=\min \{\cS'_{h}\}$, see eg. \cite{NemOSZ}. 

In the sequel we  describe the  \emph{generalized Laufer's algorithm} (\cite[Lemma 7.4]{NemOSZ},\cite{Laufer-rational}) that can be used to calculate $s_h$.

For any $l'\in L'$ there exists a unique
minimal element $s(l')\in \cS'$  such that $l'\leq s(l')$ and $[s(l')]=[l']\in H$. It  can be obtained by the following algorithm. Set $x_0:=l'$. Then one constructs  a computation sequence $\{x_i\}_i$ as follows. If $x_i$ is already constructed and $x_i\not\in\cS'$ then there exists some $E_{u_i}$ such that $(x_i,E_{u_i})>0$. In this case take $x_{i+1}:=x_i+E_{u_i}$ (for some choice of $E_{u_i}$). Then the procedure after finitely many steps stops, say at $x_t$, (ie. when 
$x_t\in \cS'$) and necessarily $x_t=s(l')$.

The point is  that $r_h\leq s_h$ and $s(r_h)=s_h$, hence the above algorithm produces $s_h$ from
$r_h$. However, in general $r_h\neq s_h$.
This fact does not contradict the minimality of $s_h$ in $\cS'_h$ since $r_h$ might not sit in $\cS'_h$.

Note also that  if $l'\in L'_{\leq 0}$, then $s(l')=s_{[l']}$. 

\subsection{The Riemann-Roch function}
For any fixed $h\in H$ we define the (equivariant) Riemann-Roch function as follows
\begin{equation}\label{eq:RR}
\chi_h:L\to \mathbb{Z}, \ \ \ \chi_h(l)=(Z_K-2s_h-l, l)/2.
\end{equation}
Note that $Z_K-2s_h\in L'$ is a `characteristic element' (cf. \cite[6.3.7]{nbook}) in the sense that one satisfies $(Z_K-2s_h-l,l)\in 2\ZZ$, hence $\chi_h$ is well defined. In fact, if we define the (full) Riemann-Roch function by $\chi:L'\to \ZZ$, $\chi(\ell'):=(Z_K-l',l')/2$ for any $l'\in L'$ then $\chi_h(l)=\chi(l+s_h)-\chi(s_h)$, hence $\{\chi_h\}_h$ is a conveniently chosen equivariant presentation of $\chi$ (restricted to the subsets $L'_h$). Clearly, if $h=0$ then $\chi_0(l)=\chi(l)$.
Recall also that  by the Riemann-Roch theorem
 $\chi(l)=h^0({\mathcal O}_l)-h^1({\mathcal O}_l)$
 for any effective $l\in L_{>0}$. 

\subsection{Universal abelian cover and the equivariant geometric genus}
\label{ss:uac}
We fix a good resolution $\pi$ of $(X,o)$ and consider $c:(Y,o)\to (X,o)$, the universal abelian covering of $(X,o)$ (see e.g. \cite[6.2.5]{nbook}). Let $\tilde Y$ be the normalized pull--back of $\pi$ and $c$, and denote by $\pi_Y$ and $\tilde{c}$
the induced maps that complete the following commutative diagram.
\begin{equation}
\label{eq:diagram}
\xymatrix{
\ar @{} [dr] | {\ }
\tilde{Y} \ar[r]^{\tilde{c}} \ar[d]_{\pi_Y} & \tilde{X} \ar[d]^{\pi} \\
Y \ar[r]_{c} & X
}
\end{equation}

The natural action of $H$ on $(Y,o)$ induces an action $h\cdot g(y)=g(h\cdot y)$ on $\cO_{Y,0}$, where $g\in \cO_{Y,0}$ and $h\in H$. This action decomposes $\cO_{Y,0}$ as $\oplus_{\lambda\in \hat{H}} (\cO_{Y,0})_{\lambda}$ according to the characters
$\lambda \in \hat{H}:={\rm Hom}(H,\CC^*)$, where
\begin{equation}
\label{eq:Heigenspaces}
(\cO_{Y,0})_{\lambda}:=\{g\in \cO_{Y,o} \mid g(h\cdot y)=\lambda (h)g(y),\ \forall y\in Y, h\in H\}.
\end{equation}
Note that there exists a natural isomorphism $\theta:H\to \hat{H}$ given by
$h\mapsto \exp(2\pi \sqrt{-1} (l',\cdot ))\in \hat{H}:= \Hom(H,\CC^*)$, where  $l' $ is any element of $L'$ with
$h=[l']$. 
%In order to simplify our notation, we write $(\cO_{Y,o})_h$ for $(\cO_{Y,o})_{\theta(h)}$
%(and similarly for any linear $H$--representation).

The $H$--eigenspace decomposition of $\tilde c_{*}(\cO_{\tilde Y})$ is given by (see~\cite{Nem-PS,Okuma-abelian,nbook})
$$\tilde c_{*}(\cO_{\tilde Y})=\bigoplus_{h\in H}\cO_{\tilde X}(-r_h) \   \ \mbox{with} \
\cO_{\tilde X}(-r_h)=(\tilde c_{*}(\cO_{\tilde Y}))_{\theta(h)},$$
where $\cO_{\tilde X}(l')$ is the unique  line bundle $\mathcal L$ on $\tilde X$ satisfying
$\tilde c^*\mathcal L=\cO_{\tilde Y}(\tilde c^*(l'))$ (see~\cite[3.5]{Nem-CLB}), where the pull--back $\tilde c^*(l')$ is an integral cycle in $\tilde Y$ for any $l'\in L'$.
(In other words, $\cO_{\tilde X}(l')$ is the natural line bundle associated with $l'$.)

Therefore, $H^1(\tilde X, \tilde c_{*}(\cO_{\tilde Y}))$ decomposes into $\bigoplus_{h\in H}H^1(\tilde X,\cO_{\tilde X}(-r_h))$ and the dimension of the $h$-part
\begin{equation}\label{eq:eqgen}
p_g(X,o)_{h}:=h^1(\tilde X,\cO_{\tilde X}(-r_h))
\end{equation}
is called the \emph{$h$-equivariant geometric genus} of $(X,o)$. 
%This can be expressed using the cycle $s_h$ by the formula
%\begin{equation}\label{eq:eqpg}
%-\chi(s_h).
%\end{equation}
Note that for $h=0$ we get $p_g(X,o)_{h}=p_g(X,o)$ the geometric genus of $(X,o)$, while $\sum_{h\in H}p_g(X,o)_{h}$ gives the geometric genus $h^1(\tilde{Y}, \mathcal{O}_{\tilde{Y}})$
of $(Y,o)$.

\subsection{{\bf Exact sequences and vanishing theorems}}\label{ss:vanishingth} 
In the sequel we list some exact sequences and vanishing theorems that will be used later in the proofs.

For  $l\in L_{>0}$ and $l'_1\in L'$ one can consider the exact sequence 
\begin{equation}
\label{eq:restriction2}
0\to \cO_{\tilde X}(-l-l'_1)\to \cO_{\tilde X}(-l'_1)\to \cO_{l}(-l'_1)\to 0.
\end{equation}
In the  generalized Laufer's algorithm, when  $x_{i+1}=x_i+E_{u_i}$
and  $(x_i,E_{u_i})>0$ (see~\ref{ss:HrepLip}), the choice $l_1'=x_i$ and $l=E_{u_i}$ in the cohomology exact sequence associated with \eqref{eq:restriction2}  applied  repeatedly gives
\begin{equation}\label{eq:sell'}
h^1(\cO_{{\tilde X}}(-l'))-\chi(l')=h^1(\cO_{{\tilde X}}(-s(l')))-\chi(s(l')).
\end{equation}
Furthermore, if $l'=r_h$ then $s(r_h)=s_h$, hence we get 
\begin{equation}\label{eq:rhsh}
p_g(X,o)_{h}-\chi(r_h)=
h^1(\cO_{{\tilde X}}(-r_h))-\chi(r_h)=h^1(\cO_{{\tilde X}}(-s_h))-\chi(s_h).
\end{equation}

Regarding the cohomology group $h^1({\tilde X}, {\mathcal L})$ there are several
 useful vanishing theorems.

\begin{thm}[\textbf{Grauert--Riemenschneider vanishing~\cite{GrRie,Laufer-rational,Ram}}]
\label{thm:GR}
For any $(X,0)$ (even if $\Sigma$ is not a $\mathbb{Q}HS^3$) and
 any $\mathcal L\in {\rm Pic}(\tilde X)$ with
$-c_1({\mathcal L})\in Z_K+ \cS'$ one has $h^1({\tilde X},{\mathcal L})=0$.
\end{thm}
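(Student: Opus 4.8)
The plan is to use a curvette reduction that strips the statement down to the classical Grauert--Riemenschneider vanishing for the canonical sheaf. Write $-c_1(\mathcal L)=Z_K+l'$ with $l'\in\cS'$; since $\cS'$ is generated over $\ZZ_{\geq 0}$ by $\{E^*_v\}_{v\in\V}$ we have $l'=\sum_{v\in\V}a_v E^*_v$ with $a_v\in\ZZ_{\geq 0}$. For each $v$ I would choose $a_v$ curvettes of $E_v$ attached at pairwise distinct generic points of $E_v$, away from the nodes $E_u\cap E_v$, and let $C\subset\tilde X$ be the reduced effective divisor given by the disjoint union of all these smooth curve germs. Then $\pi|_C\colon C\to X$ is a closed immersion onto a curve germ through $o$, and summing the curvette identities $\pi^*\pi_*\gamma=\gamma+E^*_v$ yields the equality of $\QQ$-divisors $\pi^*\pi_* C=C+l'$. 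Taking first Chern classes in $H^2(\tilde X,\QQ)$ and using that the pullback of a Weil class from the contractible germ $X$ is trivial, one gets $[C]=-l'$, whence
\[
c_1\bigl(\omega_{\tilde X}(C)\bigr)=K_{\tilde X}+[C]=-Z_K-l'=c_1(\mathcal L).
\]
Therefore $\mathcal L\cong\omega_{\tilde X}(C)$ up to a twist by an element of $\Pic^0(\tilde X)$; this twist is trivial when $\mathcal L$ is the natural line bundle on a $\QQ$-Gorenstein germ (the situation of the intended applications), and in general it is carried harmlessly through the argument below.

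Now apply $R\pi_*$ to the structure sequence $0\to\omega_{\tilde X}\to\omega_{\tilde X}(C)\to\omega_{\tilde X}(C)|_C\to 0$. Since every fibre of $\pi$ has dimension at most $1$, $R^2\pi_*=0$, so the long exact sequence reads
\[
R^1\pi_*\omega_{\tilde X}\longrightarrow R^1\pi_*\bigl(\omega_{\tilde X}(C)\bigr)\longrightarrow R^1\pi_*\bigl(\omega_{\tilde X}(C)|_C\bigr)\longrightarrow 0 .
\]
The right-hand term vanishes because $\omega_{\tilde X}(C)|_C$ is a coherent sheaf supported on the curve $C$ and $\pi|_C$ is finite, so $R^1(\pi|_C)_*=0$. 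The left-hand term vanishes by Grauert--Riemenschneider: $R^1\pi_*\omega_{\tilde X}=0$. Hence $R^1\pi_*\bigl(\omega_{\tilde X}(C)\bigr)=0$; passing to a small Stein representative of $(X,o)$, where $R^1\pi_*$ computes $H^1(\tilde X,-)$, this gives $h^1(\tilde X,\mathcal L)=0$.

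The step the whole argument hinges on is precisely the vanishing $R^q\pi_*\omega_{\tilde X}=0$ for $q>0$ that the curvette trick isolates; it cannot be bypassed, being the case $l'=0$ of the theorem itself. One proves it by compactifying $\tilde X$ near $E$ to a projective surface and invoking Kodaira-type vanishing, or via the original coherent-cohomology argument of Grauert--Riemenschneider --- this is what the cited references supply; the same proof also yields $R^1\pi_*(\omega_{\tilde X}\otimes\mathcal N)=0$ for topologically trivial $\mathcal N$, which handles the $\Pic^0(\tilde X)$-twist above, since Kodaira vanishing only involves numerical positivity. The remaining points are routine bookkeeping: choosing the curvettes generically so that $C$ is reduced, smooth, and disjoint from the nodes of the exceptional divisor (possible as each $E_v$ is one-dimensional), and, when $\Sigma$ is not a $\QQ HS^3$, reading $\pi^*\pi_*\gamma=\gamma+E^*_v$ as an identity of $\QQ$-divisors and tracking the $\Pic^0(\tilde X)$-twist, neither of which affects the exact-sequence argument.
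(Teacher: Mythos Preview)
The paper does not supply its own proof of this statement; it is quoted as a classical result from the cited references, so there is no in-paper argument to compare against.

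Your curvette reduction is sound: tensoring $0\to\omega_{\tilde X}\to\omega_{\tilde X}(C)\to\omega_{\tilde X}(C)|_C\to 0$ by the residual twist $\mathcal N\in\Pic^0(\tilde X)$ and using finiteness of $\pi|_C$ correctly reduces $R^1\pi_*\mathcal L=0$ to $R^1\pi_*(\omega_{\tilde X}\otimes\mathcal N)=0$. The weak point is the handling of that twist. Since $\Pic^0(\tilde X)\cong H^1(\tilde X,\cO_{\tilde X})\cong\CC^{p_g}$, it is nonzero whenever $(X,o)$ is not rational, and your parenthetical that it vanishes for natural line bundles on a $\QQ$-Gorenstein germ is not justified: even if $\mathcal L$ and $\omega_{\tilde X}$ are natural, there is no reason for $\cO_{\tilde X}(C)$ (built from arbitrarily chosen curvettes) to be natural, so $\mathcal N$ need not be trivial. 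Your remedy, that ``Kodaira vanishing only involves numerical positivity'', is the right intuition but not yet an argument --- a class in $\Pic^0(\tilde X)$ need not extend to something numerically trivial on a projective compactification. What one actually invokes here is the relative Kawamata--Viehweg type vanishing $R^1\pi_*(\omega_{\tilde X}\otimes\mathcal M)=0$ for $\pi$-nef $\mathcal M$, which is precisely what the cited references (in particular Ramanujam in the surface case) establish. But that statement already proves the full theorem directly --- write $\mathcal L=\omega_{\tilde X}\otimes\mathcal M$ with $c_1(\mathcal M)=-l'$, which is $\pi$-nef since $l'\in\cS'$ --- so the curvette step, while correct, becomes redundant once you grant the input you defer to the literature.
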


\begin{thm}[\textbf{Generalized Grauert--Riemenschneider vanishing~\cite{EFBook}}]
\label{thm:GGR}  For any $(X,0)$ (even if $\Sigma$ is not a $\mathbb{Q}HS^3$),
 for any $\mathcal L\in {\rm Pic}(\tilde X)$ and for any $\Delta \in L_{\mathbb{Q}}$ with
 $\lfloor \Delta \rfloor=0$, if
$-c_1({\mathcal L})\in -\Delta + Z_K+ \cS_{\QQ}$, then $h^1({\tilde X},{\mathcal L})=0$.
\end{thm}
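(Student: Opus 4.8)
The plan is to reduce the fractional statement to the classical Grauert--Riemenschneider vanishing (Theorem~\ref{thm:GR}) via the standard cyclic covering trick of Kawamata--Viehweg type. Since $(X,o)$ is a germ, $X$ is Stein, so by the Leray spectral sequence for $\pi$ the assertion $h^1(\tilde X,\mathcal L)=0$ is equivalent to the local vanishing $R^1\pi_*\mathcal L=0$, and everything takes place on a neighbourhood of the exceptional set $E=\pi^{-1}(o)$. Writing $\mathcal L\cong\cO_{\tilde X}(K_{\tilde X}+B)$ for some integral divisor $B$ on the smooth surface $\tilde X$ and setting $D:=B-\Delta\in L_{\QQ}$, the adjunction identities \eqref{eq:KX} (by which $-Z_K$ has the same intersection numbers with the $E_v$ as $K_{\tilde X}$) show that the hypothesis $-c_1(\mathcal L)\in-\Delta+Z_K+\mathcal S_{\QQ}$ is precisely the statement that $D$ is $\pi$-nef; moreover $\lfloor\Delta\rfloor=0$ forces $\lceil D\rceil=\lceil B-\Delta\rceil=B$, so $\mathcal L\cong\cO_{\tilde X}(K_{\tilde X}+\lceil D\rceil)$. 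Thus the content beyond Theorem~\ref{thm:GR} is exactly the fractional boundary, and the task becomes a relative Kawamata--Viehweg vanishing.

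The main step is the covering construction. Let $N$ be a common denominator of the coefficients of $\Delta$. Since $\tilde X$ carries $\pi$-relatively ample line bundles --- for instance $\cO_{\tilde X}\bigl(-\sum_v E^*_v\bigr)$, whose intersection with each $E_u$ equals $1$ --- one can build, as a normalised fibre product of cyclic $\ZZ/N$-covers branched along the $E_v$ and along general $\pi$-ample correction divisors in sufficiently general position relative to $E$, a finite cover $q\colon V\to\tilde X$ with $V$ smooth and $q^*\Delta$ integral. The Esnault--Viehweg decomposition of $q_*\omega_V$ then exhibits $\mathcal L=\cO_{\tilde X}(K_{\tilde X}+\lceil D\rceil)$ as a direct summand of $q_*\omega_V$; here the hypothesis $\lfloor\Delta\rfloor=0$ is exactly what guarantees that the rounding occurring in that decomposition produces $\lceil D\rceil$ and nothing larger. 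Since $\pi\circ q\colon V\to X$ is proper and generically finite from the smooth surface $V$, the Grauert--Riemenschneider vanishing theorem (applied to $R^1$ of the canonical sheaf under such a morphism) gives $R^1(\pi\circ q)_*\omega_V=0$; and as $q$ is finite this equals $R^1\pi_*(q_*\omega_V)$, which therefore vanishes. Being a direct summand of $q_*\omega_V$, the sheaf $\mathcal L$ then satisfies $R^1\pi_*\mathcal L=0$, hence $h^1(\tilde X,\mathcal L)=0$.

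The hard part will be the book-keeping in the covering step: constructing the cover in the relative/germ setting (producing suitable $\pi$-ample correction divisors in general position, keeping $V$ smooth) and, above all, pinning down the exact form of the Esnault--Viehweg decomposition so that $\mathcal L$ --- rather than a strictly larger twist of the canonical sheaf --- occurs as a summand; this is precisely where the hypothesis $\lfloor\Delta\rfloor=0$ is used in an essential way. A minor further point, needed because $\Sigma$ is not assumed to be a $\QQ HS^3$, is that the non-numerical ($\Pic^0$) part of $\mathcal L$ must be carried through the construction --- which forces one to set up the cyclic covers using line-bundle rather than merely divisor data, but this causes no essential difficulty.
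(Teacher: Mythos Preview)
The paper does not prove this statement; Theorem~\ref{thm:GGR} is simply quoted in \S\ref{ss:vanishingth} as a known result from Esnault--Viehweg~\cite{EFBook}, alongside the classical Grauert--Riemenschneider theorem, and used later as a black box. Your sketch via the Kawamata--Viehweg covering trick and the decomposition of $q_*\omega_V$ is exactly the argument developed in that reference, so there is nothing in the paper to compare it against.
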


\section{The delta invariant of a $\mathbb{Q}$-Cartier curve}

\subsection{} Let $(X,o)$ be a normal surface singularity with $\mathbb{Q}HS^3$ link and $(C,o)\subset (X,o)$ a reduced curve on it. Assume further that \emph{$C$ is a $\mathbb{Q}$-Cartier divisor on $(X,o)$}, ie. $nC\subset X$ is a Cartier divisor for some $n>0$, or equivalently, the  class of $(C,o)$  has finite order in the local divisor class group $Cl(X,o)$. 

We fix a good embedded resolution $\pi:\tilde X \to X$ of the pair $C\subset X$. Let $\pi^*(C)=\tilde C+l'_C$ be the total transform of $C$, where $\tilde C$ is the strict transform and $l'_C\in L'$ is the rational cycle associated with the curve. This means 
that $\pi^*(C)$ is numerically trivial: the divisorial intersection of $\pi^*(C)$ with $ E_v$ is zero for every $v\in \mathcal{V}$. 

The divisor $\tilde C$ satisfies $c_1\cO_{\tilde X}(-\tilde C)=l'_C$ in $H^2(\tilde{X}, \mathbb{Z})\approx L'$ and, 
since $C$ is $\mathbb{Q}$-Cartier,
the linear equivalence $n(-\tilde C)\sim nl'_C$,  where $n$ is the order of $l'_C$ in $H$. Furthermore, since the link is a $\mathbb{Q}HS^3$, $\cO_{\tilde X}(-\tilde C)$ is unique with these properties, hence $\cO_{\tilde X}(-\tilde C)\simeq \cO_{\tilde X}(l'_C)$, see eg. \cite[Prop. 6.1.16]{nbook} (where $\cO_{\tilde X}(l'_C)$ is the natural line bundle associated with $l'_C$). 

Now, we regard  $(C,o)$ as an abstract reduced curve germ with irreducible decomposition $(C,o)=\bigcup_{i\in I}(C_i,o)$ ($I$ is finite). Let $\gamma_i:(\mathbb{C},0)\to (C,o), t\mapsto \gamma_i(t)$ be the normalization of the branch $(C_i,o)$ and for any $g\in \cO_{(C,o)}$ we consider its value $v(g):=(v_i(g))_{i\in I}$ defined by $v_i(g):=\textrm{ord}_t(g(\gamma_i(t)))$ for any $i\in I$. Then the multigerm $(C,o)^{\widetilde{}} :=(\mathbb{C},0)^{\vert I\vert}$ together with the map $(C,o)^{\widetilde{}}  \stackrel{\gamma:=(\gamma_i)_i}{\longrightarrow} (C,o)$ serve as the normalization. This defines the delta invariant of $(C,o)$ as $\delta(C,o):=\dim \gamma_*\cO_{(C,0)^{\widetilde{}}}/\cO_{C,0}$.

\subsection{The main formulae}
In this section we present two formulae for the delta invariant of $(C,o)$ assuming that it is embedded as a $\mathbb{Q}$-Cartier divisor in $(X,o)$. The first one will be a direct consequence of the work of \cite{delta1}, while the second one assumes that $(X,o)$ is a $\mathbb{Q}$-Gorenstein normal surface singularity. 

\begin{thm}\label{thm:delta}
Let $(X,o)$ be a  normal surface germ with $\mathbb{Q}HS^3$ link and $(C,o)\subset (X,o)$ be a $\mathbb{Q}$-Cartier curve on it and let $h_C:=[l'_C]$ be the class in $H$ associated with $(C,o)$. 

(a) Then the delta invariant of $(C,o)$ satisfies the following 
\begin{equation}\label{eq:delta1}
\delta(C,o)=\chi(-l'_C)-\chi(r_{-h_C})+p_g(X,o)_{-h_C}-p_g(X,o).
\end{equation}

(b) If we further assume that $(X,o)$ is $\mathbb{Q}$-Gorenstein, then we also have 
\begin{equation}\label{eq:delta2}
\begin{split}
 \delta(C,o) &= 
\chi_{[Z_K]+h_C}(Z_K+\ell'_C-s_{[Z_K+h_C]})+\chi(s_{[Z_K+h_C]}) \\ &-\chi(r_{[Z_K+h_C]})+p_g(X,o)_{[Z_K]+h_C}-p_g(X,o).
\end{split}
\end{equation}
\end{thm}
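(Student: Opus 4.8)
The plan is to prove (a) first using the existing computation of $\delta(C,o)$ from \cite{delta1}, and then to massage (a) into (b) under the $\mathbb{Q}$-Gorenstein hypothesis by rewriting the line bundle $\cO_{\tilde X}(l'_C)$ in a dual form. For part (a), the starting point is the standard exact sequence relating the structure sheaf of the abstract curve to that of the surface. On the embedded resolution $\pi$ of the pair $C\subset X$, one has $\cO_{\tilde X}(-\tilde C)\simeq \cO_{\tilde X}(l'_C)$ as established in the subsection, and the adjunction/restriction sequence $0\to \cO_{\tilde X}(-\tilde C)\to \cO_{\tilde X}\to \cO_{\tilde C}\to 0$ on a suitable neighbourhood, together with $R^1\pi_*$-vanishing for the strict transform (which is smooth), identifies $\delta(C,o)$ with the drop in $h^1$: concretely $\delta(C,o)=h^1(\tilde X,\cO_{\tilde X}(l'_C))-h^1(\tilde X,\cO_{\tilde X})=h^1(\tilde X,\cO_{\tilde X}(l'_C))-p_g(X,o)$. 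Now one feeds this $h^1$ into the Laufer-type identity \eqref{eq:sell'}: since $l'_C\in L'$ has class $[l'_C]=h_C$, we have $-l'_C\in L'_{-h_C}$, and applying \eqref{eq:sell'} (or rather its version starting from an arbitrary representative and running the generalized Laufer algorithm up to $s_{-h_C}$, together with \eqref{eq:rhsh}) gives
\begin{equation*}
h^1(\cO_{\tilde X}(l'_C))-\chi(-l'_C)=p_g(X,o)_{-h_C}-\chi(r_{-h_C}).
\end{equation*}
Substituting this back yields exactly \eqref{eq:delta1}. The only genuinely delicate point here is making sure the sign conventions and the identification $c_1\cO_{\tilde X}(-\tilde C)=l'_C$ line up with the normalization of $\chi$ and of the equivariant geometric genus; this is bookkeeping but must be done carefully, and one should double-check that \eqref{eq:sell'} is being applied to $-l'_C$ and not $l'_C$ (the natural line bundle $\cO_{\tilde X}(l'_C)$ is $\cO_{\tilde X}(-(-l'_C))$ in the notation of \ref{ss:vanishingth}).

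For part (b), the idea is to use $\mathbb{Q}$-Gorenstein duality to convert the $h^1$ in the computation above from the class $-h_C$ to the class $[Z_K]+h_C$. Under $\mathbb{Q}$-Gorenstein with $\mathbb{Q}HS^3$ link we have $\cO_{\tilde X}(K_{\tilde X})\simeq \cO_{\tilde X}(-Z_K)$, and Serre duality on $\tilde X$ (or rather the version for natural line bundles that will be proved in Proposition \ref{prop:dual}) gives an identity of the shape $h^1(\cO_{\tilde X}(l'))=h^1(\cO_{\tilde X}(Z_K-l'))$ up to the appropriate $h^0$ correction, or more precisely a clean statement about the equivariant pieces. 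Applying this with $l'=l'_C$ replaces $p_g(X,o)_{-h_C}$-type data by $p_g(X,o)_{[Z_K]+h_C}$-type data, and the cycle $r_{-h_C}$ gets replaced, after running the generalized Laufer algorithm in the class $[Z_K+h_C]$, by the combination involving $s_{[Z_K+h_C]}$ and the evaluation of $\chi_{[Z_K]+h_C}$ at $Z_K+\ell'_C-s_{[Z_K+h_C]}$. Concretely: starting from $h^1(\cO_{\tilde X}(l'_C))$, rewrite it via duality as $h^1(\cO_{\tilde X}(-(Z_K-l'_C)))=h^1(\cO_{\tilde X}(-(Z_K-\ell'_C)))$, note $Z_K-\ell'_C\in L'_{[Z_K+h_C]}$ (since $-l'_C$ and $l'_C$ have opposite classes), and then apply \eqref{eq:sell'}/\eqref{eq:rhsh} in that class to trade $h^1$ for $\chi$-values, arriving at
\begin{equation*}
h^1(\cO_{\tilde X}(l'_C))=\chi_{[Z_K]+h_C}\bigl(Z_K+\ell'_C-s_{[Z_K+h_C]}\bigr)+\chi(s_{[Z_K+h_C]})-\chi(r_{[Z_K+h_C]})+p_g(X,o)_{[Z_K]+h_C}.
\end{equation*}
Subtracting $p_g(X,o)$ gives \eqref{eq:delta2}. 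The translation between the "full" Riemann--Roch function $\chi$ and the equivariant pieces $\chi_h$ via $\chi_h(l)=\chi(l+s_h)-\chi(s_h)$ is what produces the particular grouping of terms written in \eqref{eq:delta2}, so one must unwind that definition carefully to see the stated form emerges (in particular that the "naive" $\chi(Z_K-\ell'_C)$ splits as $\chi_{[Z_K]+h_C}(Z_K+\ell'_C-s_{[Z_K+h_C]})+\chi(s_{[Z_K+h_C]})$ after writing $Z_K-\ell'_C = (Z_K+\ell'_C - s_{[Z_K+h_C]}) + s_{[Z_K+h_C]}$, using $-\ell'_C \equiv \ell'_C$ only at the level of classes while keeping the honest cycle... here one has to be attentive: it is $-\ell'_C$, not $+\ell'_C$, that has class $-h_C$, so the cycle appearing is $Z_K-(-\ell'_C)=Z_K+\ell'_C$ only if the duality is set up with $-\ell'_C$; this sign matching is precisely the subtle point).

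The main obstacle I expect is exactly this last sign/class bookkeeping in part (b): one must be scrupulous about whether the relevant natural line bundle is $\cO_{\tilde X}(\ell'_C)$ or $\cO_{\tilde X}(-\ell'_C)$, which class ($h_C$ versus $-h_C$ versus $[Z_K]\pm h_C$) each of $r$, $s$, and the equivariant genus is indexed by, and how Proposition \ref{prop:dual}'s duality statement is phrased (it may already be stated in terms that make the substitution immediate, in which case the work is light, or it may be stated as a duality $p_g(X,o)_h \leftrightarrow p_g(X,o)_{[Z_K]-h}$ or similar, requiring one more class-arithmetic step). The cohomological and Laufer-algorithm inputs are all already available (\eqref{eq:sell'}, \eqref{eq:rhsh}, Proposition \ref{prop:dual}, the $\mathbb{Q}$-Gorenstein identification of $\cO_{\tilde X}(K_{\tilde X})$), so modulo that bookkeeping the proof is a direct chain of substitutions; the part (a) argument is essentially the argument of \cite{delta1} rephrased for the $\mathbb{Q}$-Cartier case, where the only new ingredient over the Cartier case is that $\tilde C$ is cut out by the \emph{natural} line bundle $\cO_{\tilde X}(\ell'_C)$ rather than by $\cO_{\tilde X}(\ell'_C)$ with $\ell'_C\in L$, which is exactly what the $\mathbb{Q}HS^3$ hypothesis guarantees via uniqueness of natural line bundles.
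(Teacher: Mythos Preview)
Your argument for part (a) is correct and essentially matches the paper's: the identity $\delta(C,o)=h^1(\tilde X,\cO_{\tilde X}(l'_C))-p_g(X,o)$ (from \cite{delta1}, using $\cO_{\tilde X}(-\tilde C)\simeq \cO_{\tilde X}(l'_C)$) combined with \eqref{eq:sell'} and \eqref{eq:rhsh} for $-l'_C$ gives \eqref{eq:delta1} directly.

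For part (b), your target formula and your list of ingredients are right, but the path you sketch does not quite work and contains a sign error. Two specific issues. First, there is no identity of the form ``$h^1(\cO_{\tilde X}(l'_C))=h^1(\cO_{\tilde X}(-(Z_K-l'_C)))$'' on the open surface $\tilde X$; Proposition~\ref{prop:dual} is \emph{not} a duality for arbitrary $l'$ but only the statement
\[
h^1(\cO_{\tilde X}(-s_{-h}))-\chi(s_{-h})=h^1(\cO_{\tilde X}(-s_{[Z_K]+h}))-\chi(s_{[Z_K]+h}),
\]
and its proof genuinely requires the vanishing of Proposition~\ref{thm:h1}. Second, $Z_K-l'_C$ has class $[Z_K]-h_C$, not $[Z_K]+h_C$; the cycle in the correct class is $Z_K+l'_C$.

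The paper's route avoids both problems by never trying to dualize $h^1(\cO_{\tilde X}(l'_C))$ itself. Instead, take the intermediate form from (a) at the $s$-level, namely
\[
\delta(C,o)=\chi(-l'_C)-\chi(s_{-h_C})+h^1(\cO_{\tilde X}(-s_{-h_C}))-p_g(X,o),
\]
and do two independent rewritings. For the $\chi$-part, use the purely algebraic symmetry $\chi(l')=\chi(Z_K-l')$, which gives $\chi(-l'_C)=\chi(Z_K+l'_C)$; then the definition $\chi_h(l)=\chi(l+s_h)-\chi(s_h)$ with $h=[Z_K]+h_C$ yields $\chi(-l'_C)=\chi_{[Z_K]+h_C}(Z_K+l'_C-s_{[Z_K+h_C]})+\chi(s_{[Z_K+h_C]})$. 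For the $h^1$-part, apply Proposition~\ref{prop:dual} with $h=h_C$ and then \eqref{eq:rhsh} to convert $h^1(\cO_{\tilde X}(-s_{-h_C}))-\chi(s_{-h_C})$ into $p_g(X,o)_{[Z_K]+h_C}-\chi(r_{[Z_K]+h_C})$. Substituting both pieces gives \eqref{eq:delta2}. The missing observation in your sketch is that the passage from $-h_C$ to $[Z_K]+h_C$ happens separately and for different reasons in the $\chi$-term (pure algebra, no $\mathbb Q$-Gorenstein needed) and in the $h^1$-term (Proposition~\ref{prop:dual}, where $\mathbb Q$-Gorenstein is essential).
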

\vspace{0.1cm}

\begin{proof} Let us first prove part {\it (a) } of the theorem. The second part {\it (b)}
will be given after Corollary \ref{cor:3.5}, as a consequence of some preparation.

We know by \cite[Thm 1.3.(2)]{delta1} that, in general, we have $\delta(C,o)=h^1({\tilde X}, \cO_{{\tilde X}}(-\widetilde{C}))-p_g(X)$. Moreover, $h^1({\tilde X}, \cO_{{\tilde X}}(-\widetilde{C}))=h^1({\tilde X}, \cO_{{\tilde X}}(l'_C))$ 
since $(C,o)\subset (X,o)$ is $\mathbb{Q}$-Cartier. Therefore, by \cite[Thm 1.3(1)]{delta1} one has 
\begin{equation}\begin{split}\label{eq:uj}
\delta(C,o)  = & h^1({\tilde X},\cO_{{\tilde X}}(l'_C))-p_g(X,o) \\  =&
\chi(-l'_C)-\chi(s_{-h_C})+h^1({\tilde X}, \cO_{{\tilde X}}(-s_{-h_C})-p_g(X,o),
\end{split}\end{equation}
which, together with (\ref{eq:rhsh}), gives (\ref{eq:delta1}).
\end{proof}

\begin{rem}\label{rem:delta} 
(1) \ Formula (\ref{eq:delta1}) shows that the (a priori analytic) delta invariant of $(C,o)$ depends on some {\it embedded topological data} associated with the embedding 
$(C,o)\subset (X,o)$ and on the {\it $(C,o)$-independent analytic  data of the 
ambient space $(X,o)$} given by its equivariant geometric genera. 
For example, if two reduced curve germs $(C_1,o)$ and $(C_2,0)$ are embedded in $(X,o)$ in such a way that 
$h_{C_1}=h_{C_2}$, then 
$$\delta(C_1,o)-\chi(-l'_{C_1})=\delta(C_2,o)-\chi(-l'_{C_2}).$$
In the particular case when $(C,o)$ is Cartier we have  $h_C=0$, and (\ref{eq:delta1}) reduces to $\delta(C,o)=\chi(-l'_C)$, a formula which follows from  \cite{BGdelta} too, cf. \cite[3.4]{delta1}. \vspace{0.2cm}

(2) \  Our motivation behind  the second formula (\ref{eq:delta2})  is multi-layered.  Recent progress in the study of representable numerical semigroups (see sect. \ref{s:genorb}) have shown that the cycle $Z_K+\ell'_C-s_{[Z_K+h_C]}\in L$ 
gives the answer to several key questions. 

First, by \cite{stronglyflat}, in the case when $(X,o)$ is a weighted homogeneous singularities and $\tilde{C}$ is a transversal disc intersecting the central exceptional curve $E_0$  
corresponding to the star shaped graph (of the canonical resolution),  
the coefficient of the cycle $Z_K+\ell'_C-s_{[Z_K+h_C]}$ along $E_0$   gives the conductor (or the Frobenius number plus one) of the corresponding representable semigroup (for terminology see section \ref{s:genorb}).

This is also true when $(C,o)$ is a reduced (not necessarily irreducible) curve on a rational normal surface singularity $(X,o)$, \cite{CLMMN_sgp}. In this case, if we fix a good embedded resolution (with $\vert \tilde C \cap E_v\vert \leq 1$) of the pair $(C,o)\subset (X,o)$, then the restriction $(Z_K+\ell'_C-s_{[Z_K+h_C]})\vert_{I_C}$ to the support set $I_C:=\{v\in \V \ : \ \tilde C \cap E_v\neq\emptyset\}$ is equal to the conductor of the semigroup of $(C,o)$. \vspace{0.2cm}

(3) \ Another  appearance of the cycle 
$Z_K+\ell'_C-s_{[Z_K+h_C]}$ is in the delta invariant formula  from \cite{delta1,delta2} in the special case when $(X,o)$ is rational. This can be re-obtained from 
(\ref{eq:delta2}) as follows. Since $(X,o)$ is rational, 
$p_g=h^1(\tilde X, \cO_{\tilde X})=0$. On the other hand, by (\ref{eq:rhsh}) $(p_g)_{[Z_K]+h_C}=\chi(r_{[Z_K+h_C]})-\chi(s_{[Z_K+h_C]})$ since $h^1(\tilde X,\cO_{\tilde X}(-s_{h_C}))=0$ for $(X,o)$ rational (by Lipman's vanishing theorem \cite{Lipman}), cf. \cite[Prop. 4.3]{delta1}. Therefore, (\ref{eq:delta2}) reduces to $\delta(C,o)=\chi_{[Z_K]+h_C}(Z_K+\ell'_C-s_{[Z_K+h_C]})$ (cf. \cite[Thm. 1.1]{delta1}).
\end{rem}

Though in Remark \ref{rem:delta}(3)  the vanishing $h^1(\tilde X,\cO_{\tilde X}(-s_h))=0$ holds whenever $(X,o)$ is rational, this vanishing does  not happen  in general. However, in the sequel we will prove   that {\it this  vanishing is still valid whenever  $(X,o)$ is $\Q$-Gorenstein}.
Using this vanishing, we will prove for any $h\in H$ a cohomological duality result which connects $s_h$ with its `dual' $s_{[Z_K]-h}$ (see Proposition \ref{prop:dual}). This identity 
transforms formula (\ref{eq:delta1}) into  (\ref{eq:delta2}). 

\begin{prop}\label{thm:h1} Assume that $(X,o)$ is $\Q$-Gorenstein with $\Q HS^3$ link. Then for any $h\in H$ one has
\begin{equation}\label{eq:h1}
h^1(\cO_{{\tilde X}}(-Z_K+s_h))=0.
\end{equation}
\end{prop}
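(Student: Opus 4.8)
The plan is to combine the two Grauert--Riemenschneider-type vanishing theorems (Theorems \ref{thm:GR} and \ref{thm:GGR}) with the fact that $\Q$-Gorenstein with $\Q HS^3$ link gives $\cO_{\tilde X}(K_{\tilde X})\simeq \cO_{\tilde X}(-Z_K)$, hence $\cO_{\tilde X}(-Z_K+s_h)$ is, up to the natural-line-bundle identification, the canonical bundle twisted by $\cO_{\tilde X}(s_h)$ with $s_h$ anti-nef. First I would rewrite the target: by the $\Q$-Gorenstein hypothesis the line bundle $\cO_{\tilde X}(-Z_K+s_h)$ has first Chern class $-Z_K+s_h\in L'$, so by Serre-type duality on the smooth surface $\tilde X$ (or directly by the formal/analytic duality for the resolution, cf. \cite[Thm. 6.1.9 or similar]{nbook}) $h^1(\cO_{\tilde X}(-Z_K+s_h))$ is dual to $h^1$ of the bundle whose first Chern class is $Z_K+Z_K-s_h = 2Z_K - s_h$... — more carefully, one wants to express the statement in the form covered by Theorem \ref{thm:GGR}: we need $-c_1(\mathcal L)=Z_K-s_h$ to lie in $-\Delta + Z_K + \cS_\QQ$ for some $\Delta\in L_\QQ$ with $\lfloor\Delta\rfloor=0$. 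Since $s_h$ is anti-nef, $-s_h\in -\cS_\QQ$, which points the wrong way; so the direct application is not to $\cO_{\tilde X}(-Z_K+s_h)$ itself but to its Serre dual.

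So the key steps, in order, would be: (i) invoke $\Q$-Gorenstein to identify $\Omega^2_{\tilde X}=\cO_{\tilde X}(K_{\tilde X})\simeq \cO_{\tilde X}(-Z_K)$; (ii) apply Serre duality (in the appropriate analytic form on $\tilde X$ relative to the exceptional fibre, or via the formal duality of \cite{nbook}) to rewrite $h^1(\cO_{\tilde X}(-Z_K+s_h))$ as the dimension of a cohomology group of the dual line bundle, whose first Chern class becomes $-Z_K-(-Z_K+s_h)+\textrm{(something)}$ — precisely, the dual of $\cO_{\tilde X}(l')$ twisted by $\cO_{\tilde X}(K_{\tilde X})$ has $c_1$ equal to $-l'$, so the dual object has $-c_1 = -Z_K + s_h - (-Z_K) = \ldots$; I would carry this out so that the dual group is $h^1$ of a bundle $\mathcal L$ with $-c_1(\mathcal L) = Z_K + s_h$ modulo the canonical twist, i.e. effectively we must show $h^1$ vanishes for a bundle whose $-c_1$ lies in $Z_K + \cS'$ (using $s_h\in\cS'$); (iii) then Theorem \ref{thm:GR} applies verbatim, since $-c_1(\mathcal L)=Z_K+s_h\in Z_K+\cS'$. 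Alternatively, if the duality shifts by a non-integral amount, one uses the refined Theorem \ref{thm:GGR} with a suitable $\Delta$ coming from the fractional part introduced by passing between $L'$ and $L$; the condition $\lfloor\Delta\rfloor=0$ should be automatic because $s_h\in L'$ and the fractional parts are all in $[0,1)$.

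The main obstacle I anticipate is bookkeeping the duality correctly on the level of natural line bundles: Serre duality on $\tilde X$ naturally pairs $\cO_{\tilde X}(l')$ with $\cO_{\tilde X}(K_{\tilde X})\otimes\cO_{\tilde X}(-l')$, but since $\tilde X$ is non-compact one must use the correct version — typically $h^1(\tilde X,\mathcal L)^\vee \cong h^1_{\{o\}}$ or the duality between $h^1(\mathcal L)$ and $h^1$ with supports, and for the specific situation of a resolution with $\Q HS^3$ link this is exactly the kind of statement recorded in \cite{nbook} (and is the content that Proposition \ref{prop:dual} will refine). So I would cite the appropriate duality statement from \cite{nbook} rather than reprove it, apply it to $\mathcal L=\cO_{\tilde X}(-Z_K+s_h)$, reduce to a $\cS'$-positivity condition, and close with Theorem \ref{thm:GR} (or Theorem \ref{thm:GGR} if a fractional correction is needed). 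The only genuinely delicate point is checking that the twist by $\cO_{\tilde X}(-Z_K)$ — legitimate precisely because $(X,o)$ is $\Q$-Gorenstein — is compatible with the natural-line-bundle structure, i.e. that $\cO_{\tilde X}(K_{\tilde X})\simeq\cO_{\tilde X}(-Z_K)$ holds as natural line bundles and not merely numerically; this is exactly the equivalent characterization of $\Q$-Gorenstein recalled in \ref{sec:lattice}, so it is available.
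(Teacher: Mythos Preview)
Your approach has a genuine gap. The hope that ``dualize, then apply Grauert--Riemenschneider'' closes the argument does not survive the bookkeeping you yourself flagged as delicate.

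First, even granting the correct form of duality (which the paper obtains not on the noncompact $\tilde X$ but on a large thickening $\ell$ of $E$, via the formal function theorem and Serre duality on the compact scheme $\ell$), the target becomes $h^0(\cO_\ell(\ell-s_h))=0$. To kill this via Theorem~\ref{thm:GGR} you would need $-c_1=Z_K-s_h\in -\Delta+Z_K+\cS_\QQ$ with $\lfloor\Delta\rfloor=0$, i.e.\ $\Delta-s_h\in\cS_\QQ$. This is immediate for $r_h$ (take $\Delta=r_h$, whose coefficients lie in $[0,1)$ by construction), but \emph{fails} for $s_h$: in general $s_h$ has integer parts $\geq 1$, and no admissible $\Delta$ exists. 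Equivalently, your sketch ``$-c_1(\mathcal L)=Z_K+s_h\in Z_K+\cS'$, so Theorem~\ref{thm:GR} applies'' has the sign wrong: the bundle in question has $-c_1=Z_K-s_h$, not $Z_K+s_h$, and $-s_h$ points out of $\cS_\QQ$.

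Second, a naive Serre duality on $\tilde X$ cannot possibly give what you want: if $H^1(\tilde X,\mathcal L)\cong H^1(\tilde X,K_{\tilde X}\otimes\mathcal L^{-1})^\vee$ held, then with $K_{\tilde X}\simeq -Z_K$ one would get $h^1(\cO_{\tilde X}(-Z_K+s_h))=h^1(\cO_{\tilde X}(-s_h))$, and the latter is the (shifted) equivariant geometric genus, typically nonzero. So the duality you invoke must be the one on $\ell$, and then you are forced back to the $r_h$/$s_h$ discrepancy above.

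The paper's proof confronts exactly this: it gets the $r_h$ statement $h^0(\cO_\ell(\ell-r_h))=0$ for free from Theorem~\ref{thm:GGR}, and then runs a nontrivial diagram chase. Using the Laufer computation sequence from $r_h$ to $s_h$ it shows $H^0(\cO_{\Delta_h}(-r_h))=0$, hence the map $H^1(\cO_{\tilde X}(-s_h))\to H^1(\cO_{\tilde X}(-r_h))$ is injective; a two-row diagram comparing the long exact sequences for $\cO_{\tilde X}(-s_h)\hookrightarrow\cO_{\tilde X}(\ell-s_h)$ and $\cO_{\tilde X}(-r_h)\hookrightarrow\cO_{\tilde X}(\ell-r_h)$ then forces $H^0(\cO_\ell(\ell-s_h))=0$. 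This bootstrap from $r_h$ to $s_h$ is the missing idea in your proposal.
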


\begin{proof}
By the formal function theorem, (\ref{eq:h1}) is equivalent to the vanishing 
$h^1(\cO_{\ell}(-Z_K+s_{h}))=0$ for every $\ell\in L$ and $\ell\gg 0$. In fact, since $(X,o)$ is $\Q$-Gorenstein and the link is a $\Q HS^3$ one has that $\cO_{\tilde X}(K_{\tilde X}+Z_K)$ is trivial (cf. \cite[6.3.27]{nbook}), hence in the duality formulas one can replace $K_{\tilde X}$ with $-\ZK$. 
Therefore,  Serre duality implies 
\begin{equation}\label{eq:help1}
h^1(\cO_{\ell}(-Z_K+s_{h}))=h^0(\cO_{\ell}(\ell-s_h)).
\end{equation}

On the other hand, by the generalized Grauert--Riemenschneider vanishing theorem~\ref{thm:GGR}
and Serre duality we have 
\begin{equation}\label{eq:MAIN9}
h^0(\cO_{\ell}(\ell-r_h))=h^1(\cO_{\ell}(-Z_K+r_{-h}))=0.
\end{equation}
Furthermore, we claim that the map $\tau:H^1(\cO_{{\tilde X}}(-s_h))\to H^1(\cO_{{\tilde X}}(-r_h))$ is injective. Indeed, let us consider $\Delta_h:=s_h-r_h$. Then the exact sequence (cf.  (\ref{eq:restriction2}))
$0\to \cO_{{\tilde X}}(-s_h)\to \cO_{{\tilde X}}(-r_h)\to \cO_{\Delta_h}(-r_h)\to 0$ gives 
\begin{equation}\label{eq:UJ}
0\to H^0(\cO_{{\tilde X}}(-s_h))\xrightarrow{\alpha} H^0(\cO_{{\tilde X}}(-r_h)) \to H^0(\cO_{\Delta_h}(-r_h))\to H^1(\cO_{{\tilde X}}(-s_h))\xrightarrow{\tau} \dots .
\end{equation}
Since $s(r_h)=s_h$, we consider the computation sequence $\{r_h+x_i\}_{i=0}^{t}$ connecting $r_h$ with $s_h$, where $x_i\in L_{\geq 0}$, $x_0=0, x_t=\Delta_h$ and $x_{i+1}=x_i+E_{v(i)}$ for some $E_{v(i)}$ such that $(r_h+x_i,E_{v(i)})>0$. Then, by considering for any $i$ the long exact sequence
$$0\to H^0(\cO_{E_{v(i)}}(-r_h-x_i))\to H^0(\cO_{x_{i+1}}(-r_h))\to H^0(\cO_{x_{i}}(-r_h))\to \dots$$
and using that $H^0(\cO_{E_{v(i)}}(-r_h-x_i))=0$ by the Chern number inequality from construction of the sequence, one obtains 
 by induction that $H^0(\cO_{x_{i}}(-r_h))=0$. 
 In particular, $H^0(\cO_{\Delta_h}(-r_h))=0$, hence in (\ref{eq:UJ}) 
  $\alpha$ is an isomorphism and $\tau$ is injective.

Next, consider the following diagram
$$
\begin{array}{ccccccccc}
0 & \to & H^0(\cO_{{\tilde X}}(-s_h)) & \stackrel{\gamma}{\longrightarrow} &
H^0(\cO_{{\tilde X}}(\ell-s_h)) & \stackrel{k}{\longrightarrow} &
 H^0(\cO_{\ell} (\ell-s_h)) &
\stackrel{j}{\longrightarrow} & H^1(\cO_{{\tilde X}}(-s_h))\\
 & &
 \Big\downarrow\vcenter{%
 \rlap{$\scriptstyle{\alpha}$}}   & &   \Big\downarrow\vcenter{%
 \rlap{$\scriptstyle{\beta}$}} & & \Big\downarrow\vcenter{%
 \rlap{$\scriptstyle{i}$}} & & \Big\downarrow\vcenter{%
 \rlap{$\scriptstyle{\tau}$}}\\
0 & \to & H^0(\cO_{{\tilde X}}(-r_h)) & \stackrel{\omega}{\longrightarrow} &
 H^0(\cO_{{\tilde X}}(\ell-r_h)) & \to &
H^0( \cO_{\ell} (\ell-r_h)) & \stackrel{l}{\longrightarrow} & H^1(\cO_{{\tilde X}}(-r_h))
\end{array} 
$$
By (\ref{eq:MAIN9}) we know that $H^0( \cO_{\ell} (\ell-r_h))=0$, hence $\omega$ is an isomorphism. Since $\alpha$ is also an isomorphism, one follows that $\beta\circ\gamma$ is an isomorphism. Moreover, $\beta$ and $\gamma$ are injective, hence they are isomorphisms too. This implies that $k=0$ and $j$ is injective. The last, together with the injectivity of $\tau$, 
implies that $\tau\circ j=l\circ i$ is also injective. But $l\circ i=0$ since $H^0( \cO_{\ell} (\ell-r_h))=0$, we must therefore have $H^0( \cO_{\ell} (\ell-s_h))=0$. This, using (\ref{eq:help1}), implies (\ref{eq:h1}). 
\end{proof}

\begin{prop}\label{prop:dual}
Let $(X,o)$ be a $\mathbb{Q}$-Gorenstein singularity with $\mathbb{Q}HS^3$ link. Then for any $h\in H$ we have the following identity
\begin{equation}\label{eq:duality}
h^1(\cO_{{\tilde X}}(-s_{-h}))-\chi(s_{-h})=h^1(\cO_{{\tilde X}}(-s_{[Z_K]+h}))-\chi(s_{[Z_K]+h}).
\end{equation}
\end{prop}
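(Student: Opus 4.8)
The plan is to deduce the duality identity \eqref{eq:duality} from Proposition \ref{thm:h1} together with Serre duality on the exceptional divisors, following the same formal-function-theorem pattern as in the proof of Proposition \ref{thm:h1}. First I would reformulate both sides cohomologically: by \eqref{eq:rhsh} one has $h^1(\cO_{\tilde X}(-s_{-h}))-\chi(s_{-h}) = p_g(X,o)_{-h}-\chi(r_{-h})$ and likewise $h^1(\cO_{\tilde X}(-s_{[Z_K]+h}))-\chi(s_{[Z_K]+h}) = p_g(X,o)_{[Z_K]+h}-\chi(r_{[Z_K]+h})$, so the claim is equivalent to the statement that $p_g(X,o)_{-h}-p_g(X,o)_{[Z_K]+h} = \chi(r_{-h})-\chi(r_{[Z_K]+h})$, i.e.\ to a duality between the equivariant geometric genera in the classes $-h$ and $[Z_K]+h$ (this is the content of the promised Corollary \ref{cor:3.5}). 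Alternatively, and perhaps more cleanly, I would keep everything at the level of the cycles $s_h$ and prove directly that $h^1(\cO_{\tilde X}(-s_{-h}))-\chi(s_{-h}) = h^1(\cO_{\tilde X}(-s_{[Z_K]+h}))-\chi(s_{[Z_K]+h})$ by comparing both to the same intermediate quantity computed on a large cycle $\ell\in L_{\gg 0}$.

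Concretely, the key step is a Serre duality computation on a large effective cycle. Fix $\ell\in L$ with $\ell\gg 0$. Because $(X,o)$ is $\Q$-Gorenstein with $\Q HS^3$ link, $\cO_{\tilde X}(K_{\tilde X}+Z_K)$ is trivial, so on the divisor $\ell$ the dualizing sheaf is $\cO_\ell(-Z_K+\ell)$ (cf.\ the argument already used in the proof of Proposition \ref{thm:h1}), and hence Serre duality on $\ell$ gives
\begin{equation*}
h^1(\cO_\ell(-s_{-h})) = h^0(\cO_\ell(-Z_K+\ell+s_{-h})).
\end{equation*}
I want to identify the right-hand side with a cohomology group attached to the class $[Z_K]+h$. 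The natural candidate is $-Z_K+\ell+s_{-h}$, whose class in $H$ is $[Z_K]+h$, so I would like to replace it by $\ell+s_{[Z_K]+h}$ up to something that does not change $h^0$ on $\ell$ for $\ell\gg 0$; the point is that $s_{[Z_K]+h}$ is the \emph{minimal} element of $\cS'_{[Z_K]+h}$ while $-Z_K+s_{-h}$ merely lies in $L'_{[Z_K]+h}$, but Proposition \ref{thm:h1} (the vanishing $h^1(\cO_{\tilde X}(-Z_K+s_{-h}))=0$) is exactly what licenses running the generalized Laufer computation sequence from $-Z_K+s_{-h}$ down to $s_{[Z_K]+h}$ without introducing any $h^1$, i.e.\ it forces $h^1(\cO_{\tilde X}(-Z_K+s_{-h}))-\chi(-Z_K+s_{-h})=h^1(\cO_{\tilde X}(-s_{[Z_K]+h}))-\chi(s_{[Z_K]+h})$ via \eqref{eq:sell'} applied to the relevant sequence (one checks $s(-Z_K+s_{-h})=s_{[Z_K]+h}$ using that $-Z_K+s_{-h}\in L'_{\le 0}$ when, say, $Z_K\ge s_{-h}$, or more carefully using the last remark of \ref{ss:HrepLip}). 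Combining this with the Serre-duality identity $h^1(\cO_{\tilde X}(-Z_K+s_{-h}))=h^1(\text{large }\ell)=\ldots$ and tracking the $\chi$-terms — recalling $\chi(-Z_K+s_{-h})=\chi(s_{-h})-(\text{symmetric correction})$, and that $\chi(l')=\chi(Z_K-l')$ by the built-in symmetry of the Riemann--Roch function $\chi(l')=(Z_K-l',l')/2$ — yields \eqref{eq:duality} after bookkeeping.

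So the skeleton is: (i) reduce \eqref{eq:h1}'s consequence to the statement $h^1(\cO_{\tilde X}(-Z_K+s_{-h}))-\chi(-Z_K+s_{-h}) = h^1(\cO_{\tilde X}(-s_{[Z_K]+h}))-\chi(s_{[Z_K]+h})$ by showing $s(-Z_K+s_{-h})=s_{[Z_K]+h}$ and invoking \eqref{eq:sell'}; (ii) use Proposition \ref{thm:h1} to get $h^1(\cO_{\tilde X}(-Z_K+s_{-h}))=0$, hence $h^1(\cO_{\tilde X}(-s_{[Z_K]+h}))=\chi(s_{[Z_K]+h})-\chi(-Z_K+s_{-h})+0$; (iii) Serre-dualize $h^1(\cO_{\tilde X}(-s_{-h}))$ via the large-$\ell$/formal-function argument and the $\Q$-Gorenstein triviality of $\cO_{\tilde X}(K_{\tilde X}+Z_K)$ to express $h^1(\cO_{\tilde X}(-s_{-h}))$ in terms of $h^0$ of a bundle in the class $[Z_K]+h$, then match it to $h^1(\cO_{\tilde X}(-s_{[Z_K]+h}))$; (iv) use the symmetry $\chi(l')=\chi(Z_K-l')$ to convert the $\chi$-terms and obtain \eqref{eq:duality}. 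The main obstacle I anticipate is step (i)/(iii): being careful that $-Z_K+s_{-h}$ really lies below $s_{[Z_K]+h}$ (or at least that its $s(\cdot)$ equals $s_{[Z_K]+h}$) so that the computation sequence and the vanishing \eqref{eq:h1} genuinely apply, and keeping the Riemann--Roch $\chi$-corrections consistent between the two sides; the sign and class bookkeeping ($-h$ versus $[Z_K]+h$, $r$ versus $s$) is where an error is most likely to creep in, so I would organize it by first proving the cohomological duality $h^1(\cO_{\tilde X}(-s_{-h}))=h^1(\cO_{\tilde X}(-s_{[Z_K]+h}))$ plus the $\chi$-correction as a single clean lemma.
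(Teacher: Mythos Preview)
Your ingredients are right --- Proposition \ref{thm:h1}, Serre duality on a large $\ell$, the identity \eqref{eq:sell'}, and the symmetry $\chi(l')=\chi(Z_K-l')$ --- but the organization in your skeleton has a genuine gap at step (i). The claim $s(Z_K-s_{-h})=s_{[Z_K]+h}$ fails in general: the Laufer operator $s(\cdot)$ returns the minimal element of $\cS'$ \emph{above} its input, so you would need $Z_K-s_{-h}\le s_{[Z_K]+h}$, and this breaks as soon as $Z_K$ has large coefficients. For instance, take $h=0$ with $(X,o)$ numerically Gorenstein and $p_g>0$: then $s_{-h}=s_{[Z_K]}=0$ but $Z_K>0$, so $s(Z_K)\gneq 0$. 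Worse, if your (i)+(ii) held as stated, combining it with the same computation for the dual class would force $h^1(\cO_{\tilde X}(-s_{-h}))+h^1(\cO_{\tilde X}(-s_{[Z_K]+h}))=0$, hence $p_g=0$ always. (There is also a sign slip: in \eqref{eq:sell'} with bundle $\cO_{\tilde X}(-Z_K+s_{-h})$ one has $l'=Z_K-s_{-h}$, so the $\chi$ term is $\chi(Z_K-s_{-h})=\chi(s_{-h})$, not $\chi(-Z_K+s_{-h})$.)

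The paper's fix is precisely the large-$\ell$ shift you mention in your second paragraph, but applied to the \emph{input of the Laufer algorithm} rather than only to Serre duality: one sets $l'=Z_K-s_{-h}-\ell$ with $\ell\gg 0$, so that $l'\le 0$ and hence $s(l')=s_{[Z_K]+h}$ by the last remark of \ref{ss:HrepLip}. Then \eqref{eq:sell'} reads
\[
h^1(\cO_{\tilde X}(-Z_K+s_{-h}+\ell))-\chi(Z_K-s_{-h}-\ell)=h^1(\cO_{\tilde X}(-s_{[Z_K]+h}))-\chi(s_{[Z_K]+h}),
\]
and the extra $+\ell$ in the bundle is removed via the exact sequence $0\to\cO_{\tilde X}(-Z_K+s_{-h})\to\cO_{\tilde X}(-Z_K+s_{-h}+\ell)\to\cO_\ell(-Z_K+s_{-h}+\ell)\to 0$, Proposition \ref{thm:h1}, and Serre duality on $\ell$. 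This produces $h^0(\cO_\ell(-s_{-h}))$, which Riemann--Roch on $\ell$ and the formal function theorem convert into $h^1(\cO_{\tilde X}(-s_{-h}))-\chi(s_{-h})$ after the $\chi$-bookkeeping. So the $\ell$-shift is not cosmetic: it simultaneously makes the Laufer step land at $s_{[Z_K]+h}$ and generates the missing $h^1(\cO_{\tilde X}(-s_{-h}))$ term that your steps (i)+(ii) alone cannot supply.
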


\begin{proof}
    We consider the cycle $\ell':=Z_K-s_{-h}-\ell\in L'$ with $\ell\in L$. Hence $[\ell']=[Z_K]+h$.
Choose the coefficients of $\ell$ sufficiently large so that $\ell'\in L'_{\leq 0}$.
In this case $s(\ell')=s_{[Z_K]+h}$, cf.~\ref{ss:HrepLip}, and~\eqref{eq:sell'} implies 
\begin{equation}\label{eq:eq0}
h^1(\cO_{{\tilde X}}(-Z_K+s_{-h}+\ell))-\chi(Z_K-s_{-h}-\ell)= h^1(\cO_{{\tilde X}}(-s_{[Z_K]+h}))-\chi(s_{[Z_K]+h}).
\end{equation}
Now we consider the exact sequence 
$$0\to \cO_{{\tilde X}}(-Z_K+s_{-h})\to
\cO_{{\tilde X}}(-Z_K+s_{-h}+\ell)\to
\cO_{\ell}(-Z_K+s_{-h}+\ell)\to 0$$
which gives the long exact sequence 
{\footnotesize
\begin{equation}\label{eq:eq1}
\begin{array}{ccccccccc}
0 & \to & H^0(\cO_{{\tilde X}}(-Z_K+s_{-h})) & \to &
H^0(\cO_{{\tilde X}}(-Z_K+s_{-h}+\ell)) & \to &
 H^0(\cO_{\ell}(-Z_K+s_{-h}+\ell)) &\to & \\
 & \to & H^1(\cO_{{\tilde X}}(-Z_K+s_{-h})) & \to & H^1(\cO_{{\tilde X}}(-Z_K+s_{-h}+\ell)) & \to &
 H^1(\cO_{\ell}(-Z_K+s_{-h}+\ell)) &\to & 0.\\
\end{array} 
\end{equation}}
Then  we get 
\begin{equation}\label{eq:eq2}
h^1(\cO_{{\tilde X}}(-Z_K+s_{-h}+\ell))=h^1(\cO_{\ell}(-Z_K+s_{-h}+\ell))=h^0(\cO_{\ell}(-s_{-h})),
\end{equation}
where the first identity is implied by (\ref{eq:h1}) from Proposition \ref{thm:h1} and (\ref{eq:eq1}), while the second uses the Serre duality combined with the fact that $(X,o)$ is a $\mathbb{Q}$-Gorenstein singularity with $\mathbb{Q}HS^3$ link. The equation (\ref{eq:eq0}) combined with (\ref{eq:eq2}) gives 
\begin{equation}\label{eq:eq3}
h^0(\cO_{\ell}(-s_{-h}))-\chi(s_{-h}+\ell)= h^1(\cO_{{\tilde X}}(-s_{[Z_K]+h}))-\chi(s_{[Z_K]+h}).
\end{equation}
Finally, from the identities 
 $h^0(\cO_{\ell}(-s_{-h}))=\chi(\cO_{\ell}(-s_{-h}))+h^1(\cO_{\ell}(-s_{-h}))$, $\chi(\cO_{\ell}(-s_{-h}))=\chi(\ell)-(\ell, s_{-h})$ and $\chi(s_{-h}+\ell)=\chi(\ell)+\chi(s_{-h})-(\ell,s_{-h})$  
 applied to (\ref{eq:eq3}),  and  the formal function theorem
 $h^0(\cO_{\ell}(-s_{-h}))=h^0(\cO_{\tilde{X}}(-s_{-h}))$ applied for 
 $l\gg 0$ 
(see~\cite{GriffithsHarris}) imply the result.
\end{proof}
Combining (\ref{eq:duality}) with (\ref{eq:rhsh}) gives the following consequence. 
\begin{cor}\label{cor:3.5}
If $(X,o)$ is a $\mathbb{Q}$-Gorenstein singularity with $\mathbb{Q}HS^3$ link. Then for any $h\in H$ one has 
\begin{equation}\label{eq:pgdual}
p_g(X,o)_h-p_g(X,o)_{[Z_K]-h}=\chi(r_h)-\chi(r_{[Z_K]-h}).
\end{equation}
In particular, this identity says that the difference of the equivariant geometric genus associated with $h$ and its `dual' $[Z_K]-h$ is topological.
\end{cor}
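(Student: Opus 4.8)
The plan is to obtain (\ref{eq:pgdual}) as a purely formal consequence of two identities already available: the equivariant Riemann--Roch relation (\ref{eq:rhsh}) and the cohomological duality (\ref{eq:duality}) of Proposition \ref{prop:dual}. No new geometric input is needed; the work is entirely in bookkeeping the group elements.

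First I would write (\ref{eq:rhsh}) for the class $h$ and then again for the class $[Z_K]-h$, obtaining
$$p_g(X,o)_h-\chi(r_h)=h^1(\cO_{\tilde X}(-s_h))-\chi(s_h),\qquad p_g(X,o)_{[Z_K]-h}-\chi(r_{[Z_K]-h})=h^1(\cO_{\tilde X}(-s_{[Z_K]-h}))-\chi(s_{[Z_K]-h}).$$
Subtracting the second from the first, the left-hand side becomes exactly
$$\bigl(p_g(X,o)_h-p_g(X,o)_{[Z_K]-h}\bigr)-\bigl(\chi(r_h)-\chi(r_{[Z_K]-h})\bigr),$$
which is precisely the quantity that (\ref{eq:pgdual}) claims to vanish, while the right-hand side becomes
$$\bigl(h^1(\cO_{\tilde X}(-s_h))-\chi(s_h)\bigr)-\bigl(h^1(\cO_{\tilde X}(-s_{[Z_K]-h}))-\chi(s_{[Z_K]-h})\bigr).$$

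To finish, I would invoke Proposition \ref{prop:dual}: applying (\ref{eq:duality}) with $h$ replaced by $-h$, and using $s_{-(-h)}=s_h$ together with $[Z_K]+(-h)=[Z_K]-h$, the duality relation reads $h^1(\cO_{\tilde X}(-s_h))-\chi(s_h)=h^1(\cO_{\tilde X}(-s_{[Z_K]-h}))-\chi(s_{[Z_K]-h})$, so the displayed right-hand side is $0$. Rearranging yields (\ref{eq:pgdual}). There is essentially no obstacle here, since Proposition \ref{prop:dual} has already done the heavy lifting; the only point requiring care is to make sure the substitution $h\mapsto -h$ in (\ref{eq:duality}) lands on exactly the classes $s_h$ and $s_{[Z_K]-h}$ that appear after the subtraction. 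The concluding remark that the difference is ``topological'' is then immediate, because $\chi$, the minimal representatives $r_h$, the canonical cycle $Z_K$, and the group $H$ are all determined by the resolution graph $\Gamma$, hence by the link $\Sigma$.
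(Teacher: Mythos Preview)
Your proposal is correct and matches the paper's own argument exactly: the paper simply states that combining (\ref{eq:duality}) with (\ref{eq:rhsh}) gives the corollary, and you have spelled out precisely this combination, including the correct substitution $h\mapsto -h$ in (\ref{eq:duality}).
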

\vspace{0.1cm}
\begin{proof}[Proof of Theorem \ref{thm:delta} part {\it (b)}]
Using the definition (\ref{eq:RR}) of the equivariant $\chi$-function, the expression (\ref{eq:uj})
rewrites as 
$$\delta(C,o)=\chi_{[Z_K]+h_C}(Z_K+l'_C-s_{[Z_K]+h_C})+\chi(s_{[Z_K]+h_C})-\chi(s_{-h_C})+h^1(\tilde X,\mathcal O_{\tilde X}(-s_{-h_C}))-p_g(X,o).$$
Then using Proposition \ref{prop:dual} and (\ref{eq:rhsh}) the formula follows.
\end{proof}

\subsection{Example}
We consider the Brieskorn hypersurface singularity $(X,0)=(\{x^4-y^6+z^5=0\},0)\subset (\mathbb{C}^3,0)$ and the reduced curve $(C,0)$ on 
$(X,0)$ given  by the ideal $(x^2+y^3,z)$. Note that $(C,0)$ is just the ordinary cusp with $\delta(C,0)=1$. The aim of this example is to obtain  $\delta(C,0)=1$ using  (\ref{eq:delta2}). 

First, note that $(C,0)$ is a $\mathbb{Q}$-Cartier divisor, since $5C$ is cut out by the function $x^2+y^3$ on $(X,0)$. Furthermore, we have 
(e.g. from the Newton diagram) that $p_g=6$. A good embedded resolution of $(C,0)\subset (X,0)$ is encoded by the following graph:
\vspace{0.2cm}
\tikzset{every picture/.style={line width=0.75pt}}         
\begin{center}
\begin{tikzpicture}[x=0.75pt,y=0.75pt,yscale=-1,xscale=1]
\draw  [fill={rgb, 255:red, 0; green, 0; blue, 0 }  ,fill opacity=1 ] (247,119) .. controls (247,117.34) and (248.34,116) .. (250,116) .. controls (251.66,116) and (253,117.34) .. (253,119) .. controls (253,120.66) and (251.66,122) .. (250,122) .. controls (248.34,122) and (247,120.66) .. (247,119) -- cycle ;

\draw  [fill={rgb, 255:red, 0; green, 0; blue, 0 }  ,fill opacity=1 ] (287.33,119) .. controls (287.33,117.34) and (288.68,116) .. (290.33,116) .. controls (291.99,116) and (293.33,117.34) .. (293.33,119) .. controls (293.33,120.66) and (291.99,122) .. (290.33,122) .. controls (288.68,122) and (287.33,120.66) .. (287.33,119) -- cycle ;

\draw    (250,119) -- (369.33,119) ;

\draw  [fill={rgb, 255:red, 0; green, 0; blue, 0 }  ,fill opacity=1 ] (366.33,119) .. controls (366.33,117.34) and (367.68,116) .. (369.33,116) .. controls (370.99,116) and (372.33,117.34) .. (372.33,119) .. controls (372.33,120.66) and (370.99,122) .. (369.33,122) .. controls (367.68,122) and (366.33,120.66) .. (366.33,119) -- cycle ;

\draw  [fill={rgb, 255:red, 0; green, 0; blue, 0 }  ,fill opacity=1 ] (327.67,119) .. controls (327.67,117.34) and (329.01,116) .. (330.67,116) .. controls (332.32,116) and (333.67,117.34) .. (333.67,119) .. controls (333.67,120.66) and (332.32,122) .. (330.67,122) .. controls (329.01,122) and (327.67,120.66) .. (327.67,119) -- cycle ;

\draw    (330.67,119) -- (330.67,180.44) ;

\draw    (330.67,119) -- (290.67,179.78) ;
 
\draw  [fill={rgb, 255:red, 0; green, 0; blue, 0 }  ,fill opacity=1 ] (307.67,149.39) .. controls (307.67,147.73) and (309.01,146.39) .. (310.67,146.39) .. controls (312.32,146.39) and (313.67,147.73) .. (313.67,149.39) .. controls (313.67,151.05) and (312.32,152.39) .. (310.67,152.39) .. controls (309.01,152.39) and (307.67,151.05) .. (307.67,149.39) -- cycle ;

\draw  [fill={rgb, 255:red, 0; green, 0; blue, 0 }  ,fill opacity=1 ] (287.67,179.78) .. controls (287.67,178.12) and (289.01,176.78) .. (290.67,176.78) .. controls (292.32,176.78) and (293.67,178.12) .. (293.67,179.78) .. controls (293.67,181.43) and (292.32,182.78) .. (290.67,182.78) .. controls (289.01,182.78) and (287.67,181.43) .. (287.67,179.78) -- cycle ;

\draw  [fill={rgb, 255:red, 0; green, 0; blue, 0 }  ,fill opacity=1 ] (327.67,180.44) .. controls (327.67,178.79) and (329.01,177.44) .. (330.67,177.44) .. controls (332.32,177.44) and (333.67,178.79) .. (333.67,180.44) .. controls (333.67,182.1) and (332.32,183.44) .. (330.67,183.44) .. controls (329.01,183.44) and (327.67,182.1) .. (327.67,180.44) -- cycle ;

\draw  [fill={rgb, 255:red, 0; green, 0; blue, 0 }  ,fill opacity=1 ] (327.67,152.72) .. controls (327.67,151.07) and (329.01,149.72) .. (330.67,149.72) .. controls (332.32,149.72) and (333.67,151.07) .. (333.67,152.72) .. controls (333.67,154.38) and (332.32,155.72) .. (330.67,155.72) .. controls (329.01,155.72) and (327.67,154.38) .. (327.67,152.72) -- cycle ;

\draw    (290.67,179.78) -- (290.67,206.56) ;
\draw [shift={(290.67,208.56)}, rotate = 270] [color={rgb, 255:red, 0; green, 0; blue, 0 }  ][line width=0.75]    (10.93,-3.29) .. controls (6.95,-1.4) and (3.31,-0.3) .. (0,0) .. controls (3.31,0.3) and (6.95,1.4) .. (10.93,3.29)   ;

% Text Node
\draw (240,96.67) node [anchor=north west][inner sep=0.75pt]  [font=\footnotesize] [align=left] {$-2$};
% Text Node
\draw (280.67,97) node [anchor=north west][inner sep=0.75pt]  [font=\footnotesize] [align=left] {$-2$};
% Text Node
\draw (320,96.67) node [anchor=north west][inner sep=0.75pt]  [font=\footnotesize] [align=left] {$-2$};
% Text Node
\draw (359.33,96) node [anchor=north west][inner sep=0.75pt]  [font=\footnotesize] [align=left] {$-2$};
% Text Node
\draw (284.33,142.33) node [anchor=north west][inner sep=0.75pt]   [align=left] {{\footnotesize $-3$}};
% Text Node
\draw (339.33,142.67) node [anchor=north west][inner sep=0.75pt]   [align=left] {{\footnotesize $-3$}};
% Text Node
\draw (340,174.67) node [anchor=north west][inner sep=0.75pt]  [font=\footnotesize] [align=left] {$-2$};
% Text Node
\draw (265.67,173.33) node [anchor=north west][inner sep=0.75pt]  [font=\footnotesize] [align=left] {$-2$};
% Text Node
\draw (296,200.33) node [anchor=north west][inner sep=0.75pt]  [font=\footnotesize] [align=left] {$\displaystyle \tilde{C}$};
% Text Node
\draw (240,75) node [anchor=north west][inner sep=0.75pt]   [align=left] {{\footnotesize $E_1$}};
% Text Node
\draw (281,75) node [anchor=north west][inner sep=0.75pt]   [align=left] {{\footnotesize $E_2$}};
% Text Node
\draw (324,76) node [anchor=north west][inner sep=0.75pt]   [align=left] {{\footnotesize $E_3$}};
% Text Node
\draw (362,76) node [anchor=north west][inner sep=0.75pt]   [align=left] {{\footnotesize $E_4$}};
% Text Node
\draw (260,142) node [anchor=north west][inner sep=0.75pt]   [align=left] {{\footnotesize $E_5$}};
% Text Node
\draw (245,172) node [anchor=north west][inner sep=0.75pt]   [align=left] {{\footnotesize $E_6$}};
% Text Node
\draw (362,144) node [anchor=north west][inner sep=0.75pt]   [align=left] {{\footnotesize $E_7$}};
% Text Node
\draw (362,172) node [anchor=north west][inner sep=0.75pt]   [align=left] {{\footnotesize $E_8$}};
\end{tikzpicture}
\end{center}
For simplicity, we write $l'=(l'_1,\dots,l'_8)$ for a cycle $l'=\sum_{i=1}^8 l'_i E_i$. Then, by simple calculations we deduce that 
\begin{center}
$Z_K=(8,16,24,12,10,5,10,5)$ and $l'_C=E_6^*=(2,4,6,3,13/5,9/5,12/5,6/5)$. 
\end{center}
Therefore, $[Z_K]+h_C=h_C=[E_6^*]$ in $H=\mathbb{Z}_5$ and it is represented by $$r_{h_C}=(0,0,0,0,3/5,4/5,2/5,1/5).$$ The generalized Laufer algorithm (see \ref{ss:HrepLip}) from $r_{h_C}$ to $s_{h_C}$ shows that $\chi(r_{h_C})-\chi(s_{h_C})=2$, moreover, $s_{h_C}=E_6^*$. Therefore, the first term of (\ref{eq:delta2}) simplifies to $\chi_{h_C}(Z_K)=(Z_K,-E_6^*)=5$, the $E_6$-coefficient of $Z_K$. Finally, one can calculate the equivariant geometric genus $p_g(X,0)_{h_C}=4$ (eg. by the polynomial part of the equivariant Poincar\'e series associated with $(X,0)$, see \cite{LN14}). 
So, in summary, we get $\delta(C,0)=5-2+4-6=1$, which supports the formula. 

\section{Generic orbits and representable semigroups of weighted homogeneous germs}\label{s:genorb}
\subsection{Preliminaries on representable semigroups}
\subsubsection{\bf Notations and some preliminary  facts}\label{sss:facts}
Let $(X,0)$ be a normal weighted homogeneous surface singularity, which is defined as the germ at the origin of a normal affine surface $X$ with a good and effective  $\mathbb{C}^*$-action. In particular,  its affine coordinate ring is $\mathbb{Z}_{\geq 0}$--graded: $R_X=\oplus_{\ell\geq 0} R_{X,\ell}$. 

We consider the smooth compact curve $E_0:=(X\setminus\{0\})/\mathbb{C}^*$ and by $T$ we denote the closure of the graph of the map $X\setminus \{0\}\to E_0$ in $X\times E_0$. Then the first projection $T\to X$ is a modification of $(X,0)$, while the second projection $T\to E_0$ is a Seifert line bundle with zero section $E_0$. $T$ has at most a finite number of cyclic quotient singularities, which sit  at the intersection of $E_0$ with the singular fibers. After resolving in a minimal way these singularities, we get the {\em canonical (equivariant)  resolution} $\pi:\widetilde X\to X$.
The exceptional divisor $\pi^{-1}(0)$ is a normal crossing divisor and only the central curve $E_0$ can have a self-intersection number $-1$ and positive genus.

Let $\Gamma$ be the dual resolution graph of the canonical resolution $\pi$. Then, by
\cite{OW} $\Gamma$ is a `star-shaped' graph with a central vertex $v_0$ and $d\ge 0$ legs connected to it. (A leg is  a chain of vertices that corresponds to the resolution of a cyclic quotient singularity of $T$.)

The $\mathbb{C}^*$-action induces an Seifert $S^1$--action on the link $\Sigma$ of the singularity, in particular 
$\Sigma$ is a  Seifert 3-manifold with a negative definite intersection matrix. This Seifert 3-manifold 
is characterized by its normalized Seifert invariants  $Sf=(-b_0,g;(\alpha_i,\omega_i)_{i=1}^d)$, defined as follows. Each leg is determined by a pair of integers  $(\alpha_i,\omega_i)$,
where $0<\omega_i <\alpha_i$ and $\gcd(\alpha_i,\omega_i)=1$. The $i^{\mathrm{th}}$ leg has $\nu_i$ vertices, say $v_{i1},\ldots, v_{i\nu_i}$ ($v_{i1}$ is connected to the central vertex $v_0$)
with self-intersection numbers (or Euler decorations) 
$-b_{i1},\ldots, -b_{i\nu_i}$.  These can be determined by the Hirzebruch--Jung (negative) continued fraction expansion
$$ \alpha_i/\omega_i=[b_{i1},\ldots, b_{i\nu_i}]=
b_{i1}-1/(b_{i2}-1/(\cdots -1/b_{i\nu_i})\dots) \ \  \ \ (b_{ij}\geq 2).$$
All these vertices (except $v_0$) have genus--decorations zero. The central vertex $v_0$ corresponds to the central genus $g$ curve $E_0$ with self-intersection number $-b_0$.

In particular, $g=0$ if and only if $\Sigma$ is a $\mathbb{Q}HS^3$, a property which will be assumed in the sequel. In this case, we omit the genus $g$ from the notation and simply write $Sf=(-b_0;(\alpha_i,\omega_i)_{i=1}^d)$.

\subsection{\bf Representable numerical semigroups \cite{stronglyflat,flat}}\label{ss:4.2}
We consider the numerical semigroup $\mathcal{S}_{(X,0)}:=\{\ell\in\mathbb{Z}_{\geq 0} | R_{X,\ell}\neq 0\}$. 
If  $\Sigma$ is $\mathbb{Q}HS^3$ then  \cite{pinkham} shows  that $\dim(R_{X,\ell})$ is 
topological, namely it equals 
$\max\{0,1+N(\ell)\}$, where $N(\ell)$
is  the quasi-linear function 
\begin{equation}\label{defN}
N(\ell):=b_0\ell-\sum_{i=1}^d\Big\lceil \frac{\ell \omega_i}{\alpha_i}\Big\rceil.
\end{equation}
Hence, in this case the semigroup $\cS_{(X,0)}=\{\ell\in\mathbb{Z} \ | \ N(\ell)\geq 0\}$ is topological and it can be  described by Seifert invariants.  
For this reason, we denote it by $\cS_{\Gamma}$. 

We say that a numerical semigroup $\cS\subset \mathbb{N}$ is {\it representable} if it can be realized as $\cS_\Gamma$ for some canonical equivariant resolution graph $\Gamma$  as above. 

\subsection{The generic orbit curve and its semigroup}

Let $(X,0)$ be a normal weighted homogeneous surface singularity with $\mathbb{Q}HS^3$ link. (We fix a good $\mathbb{C}^*$-action if it is not unique.) As in \ref{sss:facts} consider the canonical equivariant resolution associated with the $\mathbb{C}^*$-action and lift the $\mathbb{C}^*$ orbits to the resolution space. There are special orbits corresponding to the singular fibers (the strict transform of any of these  intersects an  
end vertex $E_{i\nu_i}$)  and a 1-parameter family of generic orbits 
(their strict transforms intersect $E_0$ in the points 
$E_0\setminus \cup_i E_{v_{i1}}$). We replace the resolution of the affine 
surface by a small convenient neighborhood of the exceptional curve $E$ and we denote it by $\widetilde X$,  and let $\pi:(\widetilde X,E) \to (X,0)$ be the 
corresponding resolution map. 

Let $\widetilde C^{gen}$ be the intersection of a lifted generic orbit with $\widetilde X$ . It is a special cut of the central curve $E_0$ at a smooth point of $\pi^{-1}(0)$.  

In fact, the irreducible reduced germ $(C^{gen},0):= \pi(\widetilde{C}^{gen})$ is 
$\mathbb{Q}$-Cartier in $(X,0)$ whose total transform is  $\widetilde C^{gen}+E^*_0$, see eg. \cite[Ex. 6.2.13]{nbook}. This means that the line bundle $\cO_{{\widetilde X}}(-\widetilde C^{gen})$ is a `natural line bundle' in $\Pic(\widetilde X)$, ie. 
\begin{equation}
\cO_{{\tilde X}}(-\tilde C^{gen})\simeq \cO_{{\tilde X}}(E_0^*).
\end{equation}
In particular, all the generic orbits in $\tilde{X}$ define the same line bundle.
\vspace{0.5cm}

Now, we look at the (abstract) curve germ $(C^{gen},0)$ and consider the discrete valuation $v:\bar \cO_{C^{gen}}=\mathbb{C}\{t\}\to \mathbb{Z}_{\geq 0}\cup \infty$ defined by the normalization $\bar C^{gen}\to C^{gen}$. Then one can consider the value semigroup of $(C^{gen},0)$ defined by 
$$\cS_{C^{gen}}:=\{\ell \in \mathbb{Z}_{\geq 0} \ : \  \mbox{there exists} \ f\in \cO_{C^{gen}} \ \mbox{with} \ v(f)=\ell\}.$$
The next theorem claims that the value semigroup of the generic orbit curve (regarded as an abstract curve)
is the same as the semigroup $\cS_\Gamma$ associated with $\Gamma$.  

\begin{thm}\label{mainsmgps}
The numerical semigroup $\cS_{C^{gen}}$ associated with the germ of the irreducible curve $(C^{gen},0)$ coincides with $\cS_{\Gamma}$.   
\end{thm}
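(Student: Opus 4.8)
The plan is to identify $\cS_{C^{gen}}$ with $\cS_\Gamma$ by comparing two descriptions of the same object: on the one hand, $\cS_{(X,0)}=\{\ell: R_{X,\ell}\neq 0\}$, which equals $\cS_\Gamma$ by Pinkham's theorem; on the other hand, the value semigroup $\cS_{C^{gen}}$ of the generic orbit regarded as an abstract curve germ. The bridge between the two is the restriction map from the graded coordinate ring $R_X$ to functions on $C^{gen}$. Concretely, first I would fix a generic orbit $\ell\cdot\mathbb{C}^*\subset X\setminus\{0\}$ (parametrized by a choice of point) and recall that a homogeneous element $f\in R_{X,\ell}$ of degree $\ell$, when restricted to this orbit and pulled back via the orbit parametrization $\mathbb{C}^*\to X$, $\tau\mapsto \tau\cdot p$, becomes a monomial $c\,\tau^{\ell}$ (up to a nonzero constant $c$ depending on $p$, $f$). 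In the local coordinate $t$ at $0\in\bar C^{gen}$ on the normalization (with $\mathbb{C}^*\ni\tau$ and $t$ related by a coordinate change that is a unit times a power), this gives $v(f|_{C^{gen}})=\ell$ for a \emph{generic} choice of orbit, whenever $R_{X,\ell}\neq 0$. This shows $\cS_\Gamma=\cS_{(X,0)}\subseteq\cS_{C^{gen}}$.

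For the reverse inclusion, I would argue that every function $g\in\cO_{C^{gen},0}$ has the same value as the restriction of some homogeneous function, so that no ``new'' values can appear. The key point is that $\cO_{C^{gen},0}$ is generated (as a $\mathbb{C}$-algebra, or as a module, after completion) by restrictions of elements of $R_X$: indeed $(C^{gen},0)=\pi(\widetilde C^{gen})$ is cut out inside $(X,0)$, so $\cO_{C^{gen},0}=\cO_{X,0}/\mathfrak{p}$ where $\mathfrak{p}$ is the ideal of the orbit closure; since $R_X$ is dense in $\cO_{X,0}$ in the $\mathfrak{m}$-adic topology and the orbit closure is $\mathbb{C}^*$-invariant (the ideal $\mathfrak p$ is homogeneous), every element of $\cO_{C^{gen},0}$ is a limit of restrictions of homogeneous elements. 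Because the value $v$ only depends on finitely many jets, for any $g\in\cO_{C^{gen},0}$ there is a homogeneous $f\in R_{X,\ell}$ with $v(f|_{C^{gen}})=v(g)$, hence $v(g)\in\cS_\Gamma$. This gives $\cS_{C^{gen}}\subseteq\cS_\Gamma$ and completes the equality.

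The step I expect to be the main obstacle is making the ``generic orbit'' condition precise and checking that genericity removes exactly the right pathologies: for a \emph{fixed} orbit through a special point $p$, some homogeneous $f$ of degree $\ell\in\cS_\Gamma$ might vanish on that orbit (so $v(f|_{C^{gen}})=\infty\neq\ell$), and we need that for the generic orbit this does not happen for any $\ell$. The resolution is that $R_{X,\ell}$ being nonzero means there is a section not vanishing identically on $X\setminus\{0\}$, hence not vanishing on the generic orbit; and there are only countably many degrees $\ell$, so a countable intersection of dense opens in the $1$-parameter family of generic orbits is still nonempty (or: use that the relevant vanishing loci are proper closed subsets of the parameter curve $E_0\setminus\cup_i E_{v_{i1}}$, which is irreducible). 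I would also need the compatibility of the two valuations — the orbit's ``$\mathbb{C}^*$-degree'' filtration versus the normalization valuation $v$ on $\bar C^{gen}=\mathbb{C}\{t\}$ — which follows because the orbit map $\mathbb{C}^*\to C^{gen}\setminus\{0\}$ extends to the normalization as a finite map $\tau\mapsto t$ that is, up to a unit, $t\mapsto\tau$ (the generic orbit meets $E_0$ transversally at a smooth point, cf. the description $\widetilde C^{gen}+E_0^*$ of the total transform and \cite[Ex.~6.2.13]{nbook}), so $\mathbb{C}^*$-degree and $t$-order agree. Once these genericity and valuation-matching points are nailed down, the two inclusions above close the argument.
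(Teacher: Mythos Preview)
Your proposal is correct and follows essentially the same strategy as the paper: both inclusions rest on the observation that a homogeneous function of degree $\ell$, restricted to a generic orbit, has valuation exactly $\ell$ (or $\infty$ if it vanishes there), and the reverse inclusion comes from decomposing an arbitrary function into its homogeneous components and reading off the minimal surviving degree. The paper packages this via the resolution, writing $v(f)=m_{E_0}(f)+(\tilde C_f,\tilde C^{gen})_p$ and noting that the second term is $0$ or $\infty$ for each homogeneous piece, whereas you phrase it through the orbit parametrization $\tau\mapsto\tau\cdot p$ and a density/jet argument---but the content is the same, and your genericity discussion (avoiding countably many proper closed loci on $E_0$) matches the paper's choice of $\tilde C^{gen}$ away from the strict transforms $\tilde C_{f_\ell}$.
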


\begin{proof}
 We consider the canonical equivariant resolution $\pi$ as a good embedded resolution for $(C^{gen},0)\subset (X,0)$. We also write $p:=\tilde{C}^{gen}\cap E_0\in \tilde{X}$. 
 
 Recall that $\ell \in \cS_{C^{gen}}$ if and only if there is a function $f\in \cO_{C^{gen}}$ with $v(f)=\ell$, where $v$ is the valuation induced by the normalization of $(C^{gen},0)$. This condition is equivalent to the existence of a function $f\in \cO_{(X,0)}$ such that $(\div(f\circ \pi),\tilde C^{gen})_{\tilde{X}}=\ell$, where $\div(f\circ \pi):=(f)_{\Gamma}+\tilde C_f$ is the total transform,  $(f)_{\Gamma}$ is its part supported on $E=\pi^{-1}(0)$ and $\tilde C_f$ is the strict transform, and $(\ ,\, )_{\tilde{X}}$ denotes the divisorial intersection on $\tilde{X}$.
In particular, if the $E_0$ coefficient of $(f)_\Gamma$ is $m_{E_0}(f)$, then 
$\ell=v(f)=m_{E_0}(f)+(\tilde{C}_f, \tilde{C}^{gen})_p$, where the last term is the intersection multiplicity at $p\in\tilde{X}$. 

On the other hand, $\cS_{(X,0)}=\{\ell\in\mathbb{Z}_{\geq 0} | R_{X,\ell}\neq 0\}$ has the following reinterpretation. First note that if $f\in R_{X, \ell}$ is a homogeneous function, then 
$m_{E_0}(f)$ is exactly its homogeneous degree $\ell$. Hence  $\cS_{(X,0)}
= \{ m_{E_{0}}(f)\ |\ f \ \mbox{is homogeneous}\}$. 

The inclusion $\cS_{(X,0)}\subset  \cS_{C^{gen}}$ is rather direct. Let $f_{\ell}\in  R_{X,\ell}$
be a nonzero element of homogeneous degree $\ell$, and let $\tilde{C}_{f_{\ell}}$ be its strict transform. 
Then we can choose the generic orbit $\tilde{C}^{gen}$ in such a way that 
it does not intersect (along $E_0$) any of the strict transforms $\tilde{C}_{f_{\ell}}$, hence $(\tilde{C}_{f_{\ell}}, \tilde{C}^{gen})_p=0$.
Then $v$ associated with  $\tilde{C}^{gen}$ has value $\ell=m_{E_0}(f_\ell)=v(f_\ell)$.

Next, let us fix $\tilde{C}^{gen}$  and 
assume that $\ell\in  \cS_{C^{gen}}$. Choose  a function $f\in \cO_{(X,0)}$ such that 
$\ell=v(f)$. Write $f$ as a sum of  its homogeneous components $f=\sum _{j\geq 0} f_j$, where 
each $f_j \in R_{X,j}$ is nonzero. Then, for any fixed $j$, the strict transform $\tilde{C}_{f_j}$
of $f_j$ is a finite union of $\mathbb{C}^*$-orbits  intersected with $\tilde{X}$. 
It might contain the fixed orbit $\tilde{C}^{gen}$ or not. 
Therefore, $v(f_j)=j+(\tilde{C}_{f_j}, \tilde{C}^{gen})_p$ equals $j$ if 
$\tilde{C}_{f_j}$ does not contain the fixed orbit $\tilde{C}^{gen}$, and it is $\infty$ otherwise. 
In particular, all the possible finite values $\{v(f_j)\}_j$ are distinct, and 
$\ell=v(f)=\min \{v(f_j)\ |\  v(f_j)<\infty\}$. This shows that $\ell=v(f)=v(f_j)=j$
for some $f_j\in R_{(X,0)}$, hence $\ell\in \cS_{(X,0)} $. 
\end{proof}

\subsection{The genus of a representable semigroup} For a numerical semigroup $\cS$ with 
$|\mathbb{N}\setminus \cS| <\infty$, the number of gaps $\mathbb{N}\setminus \cS$
it is called the genus and is denoted by $g(\cS)$. 

In the previous section, we have proved that every representable semigroup can be viewed as the value semigroup of the generic orbit curve on a weighted homogeneous surface singularity  $(X,0)$. Therefore, Theorem \ref{thm:delta}  provides the following formula.  
\begin{cor}\label{cor:4.3}
Let $\cS$ be a representable semigroup and we consider one of its representatives $\Gamma$ and the associated cycles as in Sect. \ref{s:prel}. Then we have 
\begin{equation}\label{eq:genus}
g(\cS)=\chi_{[Z_K]+h_0}(Z_K+E_0^*-s_{[Z_K]+h_0})+\chi(s_{[Z_K]+h_0})-\chi(r_{[Z_K]+h_0})+(p_g)_{[Z_K]+h_0}-p_g.
\end{equation}
\end{cor}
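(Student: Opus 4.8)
The plan is to observe that Corollary \ref{cor:4.3} is essentially a direct combination of the two main achievements of the paper so far: Theorem \ref{mainsmgps}, which identifies the representable semigroup $\cS=\cS_\Gamma$ with the value semigroup $\cS_{C^{gen}}$ of the generic orbit curve, and Theorem \ref{thm:delta}(b), which gives the combinatorial formula (\ref{eq:delta2}) for the delta invariant of a $\Q$-Cartier curve on a $\Q$-Gorenstein surface germ. So the whole proof is a matter of checking that the hypotheses of Theorem \ref{thm:delta}(b) are met for $(C,o)=(C^{gen},0)\subset (X,0)$, and then translating the resulting equation via the standard identity $\delta(C,o)=g(\cS_{C,o})$ for an irreducible curve germ.

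First I would recall that for an \emph{irreducible} curve germ the delta invariant equals the genus (number of gaps) of its value semigroup: $\delta(C^{gen},0)=g(\cS_{C^{gen}})=|\mathbb{N}\setminus \cS_{C^{gen}}|$. This is classical; combined with Theorem \ref{mainsmgps} ($\cS_{C^{gen}}=\cS_\Gamma=\cS$) it gives $\delta(C^{gen},0)=g(\cS)$. Next I would verify the two standing hypotheses of Theorem \ref{thm:delta}(b). That $(C^{gen},0)$ is $\Q$-Cartier in $(X,0)$ is recorded in Section \ref{s:genorb}: its total transform is $\widetilde C^{gen}+E_0^*$, so $l'_{C^{gen}}=E_0^*\in L'$ and $h_{C^{gen}}=[E_0^*]=:h_0\in H$ has finite order. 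For $\Q$-Gorenstein-ness one uses that \emph{every normal weighted homogeneous surface singularity is $\Q$-Gorenstein}: this is a classical fact (the canonical divisor is $\mathbb{C}^*$-equivariant, hence $\Q$-linearly trivial on $X\setminus 0$, so some power of $\Omega^2_{X\setminus 0}$ is holomorphically trivial), so Proposition \ref{thm:h1}, Proposition \ref{prop:dual} and Corollary \ref{cor:3.5} all apply here.

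With these in hand, I would simply substitute into (\ref{eq:delta2}): since $l'_C=E_0^*$ and $h_C=h_0$, the class $[Z_K]+h_C$ appearing there becomes $[Z_K]+h_0$, and $Z_K+l'_C-s_{[Z_K+h_C]}$ becomes $Z_K+E_0^*-s_{[Z_K]+h_0}$. Thus (\ref{eq:delta2}) reads
\begin{equation*}
\delta(C^{gen},0)=\chi_{[Z_K]+h_0}\!\big(Z_K+E_0^*-s_{[Z_K]+h_0}\big)+\chi\big(s_{[Z_K]+h_0}\big)-\chi\big(r_{[Z_K]+h_0}\big)+(p_g)_{[Z_K]+h_0}-p_g,
\end{equation*}
and replacing the left-hand side by $g(\cS)$ yields (\ref{eq:genus}).

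The only genuine point requiring care — and the one I would flag as the main (if mild) obstacle — is making sure the \emph{same} resolution graph $\Gamma$ is being used consistently on both sides. In Theorem \ref{mainsmgps} one uses the canonical equivariant resolution as the good embedded resolution of $(C^{gen},0)\subset(X,0)$; the cycles $Z_K$, $E_0^*$, $r_h$, $s_h$ and the equivariant genera $(p_g)_h$ in (\ref{eq:genus}) must all be computed from that same $\Gamma$. Since $\delta(C^{gen},0)$, $p_g(X,0)$ and the semigroup $\cS_\Gamma$ are resolution-independent, and since formula (\ref{eq:delta2}) is proved for an arbitrary good embedded resolution, any representative $\Gamma$ of $\cS$ works — which is exactly what the statement of Corollary \ref{cor:4.3} asserts ("one of its representatives $\Gamma$"). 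Beyond this bookkeeping the proof is immediate; no new estimates or constructions are needed.
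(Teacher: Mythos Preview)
Your proposal is correct and follows exactly the approach the paper takes: the corollary is stated immediately after the sentence ``Therefore, Theorem \ref{thm:delta} provides the following formula,'' with no further proof given, so the argument is precisely the combination of Theorem \ref{mainsmgps} with Theorem \ref{thm:delta}(b) and the classical identity $\delta=g(\cS)$ for an irreducible curve germ. Your additional care in verifying the $\Q$-Gorenstein hypothesis for weighted homogeneous germs and in checking the consistency of the resolution $\Gamma$ is appropriate but goes slightly beyond what the paper spells out.
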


\begin{rem}
We wish to emphasize that in the statement of Corollary \ref{cor:4.3} and formula 
(\ref{eq:genus}) the specification of the (weighted homogeneous) 
analytic structure of $(X,0)$ is irrelevant. Indeed, 
for any weighted homogeneous surface singularities with $\mathbb{Q}HS^3$ link, the equivariant geometric genus is topological, it depends only on $\Gamma$ (cf. \cite{NNII, nbook}). 
\end{rem}

%\begin{rem}\label{rem:wc} We note also that the quantity $\chi_{[Z_K]+h_0}(Z_K+E_0^*-s_{[Z_K]+h_0})$ plays a role from the point of view of the reduction theory of lattice cohomology \cite{LNred} as well. A short explanation of this fact would be as follows. 
%For simplicity, let us denote our integral cycle by $l_c:=Z_K+E_0^*-s_{[Z_K]+h_0}$.
%Then, as we know from Remark \ref{rem:delta}(2), the $E_0$-coefficient $c:=m_{E_0}(l_c)$ is the concuctor of the corresponding representable semigroup. Based on the properties of the cannonical equivariant resolution, we can check easily that $(Z_K+E_0^*,E_v)\leq 0$, ie. it is part of the Lipman cone $\cS'_{[Z_K]+h_0}$. This implies that $l_c$ satisfies the properties $m_{E_0}(l_c)=c$ and $(l_c+s_{[Z_K]+h_0})\leq 0$ for any $v\neq v_0$ and in fact, one can prove that it is minimal with respect to these properties. In particular, $l_c\geq 0$ and, moreover, $l_c$ is one of the special cycles which determine the one-dimensional reduced lattice, in which the weight of $c\in \mathbb{Z}_{\geq 0}$ is given by $\chi_{[Z_K]+h_0}(l_c)$ (see e.g. \cite{LNred,stronglyflat} for more details). 
%\end{rem}

\begin{exam}
In this example, we will test (\ref{eq:genus}) on the following canonical equivariant resolution graph:
\begin{center}
\tikzset{every picture/.style={line width=0.75pt}}      
\begin{tikzpicture}[x=0.75pt,y=0.75pt,yscale=-1,xscale=1]

%Shape: Circle [id:dp005618189183751676] 
\draw  [fill={rgb, 255:red, 0; green, 0; blue, 0 }  ,fill opacity=1 ] (268,120) .. controls (268,118.34) and (269.34,117) .. (271,117) .. controls (272.66,117) and (274,118.34) .. (274,120) .. controls (274,121.66) and (272.66,123) .. (271,123) .. controls (269.34,123) and (268,121.66) .. (268,120) -- cycle ;
%Shape: Circle [id:dp05309235825097036] 
\draw  [fill={rgb, 255:red, 0; green, 0; blue, 0 }  ,fill opacity=1 ] (297.83,120) .. controls (297.83,118.34) and (299.18,117) .. (300.83,117) .. controls (302.49,117) and (303.83,118.34) .. (303.83,120) .. controls (303.83,121.66) and (302.49,123) .. (300.83,123) .. controls (299.18,123) and (297.83,121.66) .. (297.83,120) -- cycle ;
%Straight Lines [id:da12606964992473801] 
\draw    (274,120) -- (390.5,120) ;
%Shape: Circle [id:dp3988312636022011] 
\draw  [fill={rgb, 255:red, 0; green, 0; blue, 0 }  ,fill opacity=1 ] (357.33,120) .. controls (357.33,118.34) and (358.68,117) .. (360.33,117) .. controls (361.99,117) and (363.33,118.34) .. (363.33,120) .. controls (363.33,121.66) and (361.99,123) .. (360.33,123) .. controls (358.68,123) and (357.33,121.66) .. (357.33,120) -- cycle ;
%Shape: Circle [id:dp8774080700961246] 
\draw  [fill={rgb, 255:red, 0; green, 0; blue, 0 }  ,fill opacity=1 ] (327.67,119) .. controls (327.67,117.34) and (329.01,116) .. (330.67,116) .. controls (332.32,116) and (333.67,117.34) .. (333.67,119) .. controls (333.67,120.66) and (332.32,122) .. (330.67,122) .. controls (329.01,122) and (327.67,120.66) .. (327.67,119) -- cycle ;
%Straight Lines [id:da40089996043415344] 
\draw    (330.67,119) -- (361,170) ;
%Straight Lines [id:da6495415962061992] 
\draw    (330.67,119) -- (299.5,169.5) ;
%Shape: Circle [id:dp18975066670334284] 
\draw  [fill={rgb, 255:red, 0; green, 0; blue, 0 }  ,fill opacity=1 ] (296.5,169.5) .. controls (296.5,167.84) and (297.84,166.5) .. (299.5,166.5) .. controls (301.16,166.5) and (302.5,167.84) .. (302.5,169.5) .. controls (302.5,171.16) and (301.16,172.5) .. (299.5,172.5) .. controls (297.84,172.5) and (296.5,171.16) .. (296.5,169.5) -- cycle ;
%Shape: Circle [id:dp5753141700873128] 
\draw  [fill={rgb, 255:red, 0; green, 0; blue, 0 }  ,fill opacity=1 ] (387.5,120) .. controls (387.5,118.34) and (388.84,117) .. (390.5,117) .. controls (392.16,117) and (393.5,118.34) .. (393.5,120) .. controls (393.5,121.66) and (392.16,123) .. (390.5,123) .. controls (388.84,123) and (387.5,121.66) .. (387.5,120) -- cycle ;
%Shape: Circle [id:dp06268640059520592] 
\draw  [fill={rgb, 255:red, 0; green, 0; blue, 0 }  ,fill opacity=1 ] (358,170) .. controls (358,168.34) and (359.34,167) .. (361,167) .. controls (362.66,167) and (364,168.34) .. (364,170) .. controls (364,171.66) and (362.66,173) .. (361,173) .. controls (359.34,173) and (358,171.66) .. (358,170) -- cycle ;

% Text Node
\draw (261,129.17) node [anchor=north west][inner sep=0.75pt]  [font=\footnotesize] [align=left] {$-4$};
% Text Node
\draw (293,129) node [anchor=north west][inner sep=0.75pt]  [font=\footnotesize] [align=left] {$-2$};
% Text Node
\draw (320,133) node [anchor=north west][inner sep=0.75pt]  [font=\footnotesize] [align=left] {$-2$};
% Text Node
\draw (352,129) node [anchor=north west][inner sep=0.75pt]  [font=\footnotesize] [align=left] {$-2$};
% Text Node
\draw (292.33,174) node [anchor=north west][inner sep=0.75pt]   [align=left] {{\footnotesize $-3$}};
% Text Node
\draw (353.33,174) node [anchor=north west][inner sep=0.75pt]   [align=left] {{\footnotesize $-3$}};
% Text Node
\draw (382,128.83) node [anchor=north west][inner sep=0.75pt]  [font=\footnotesize] [align=left] {$-4$};
% Text Node
\draw (259,90) node [anchor=north west][inner sep=0.75pt]   [align=left] {{\scriptsize $E_2$}};
% Text Node
\draw (288.5,89.5) node [anchor=north west][inner sep=0.75pt]   [align=left] {{\scriptsize $E_1$}};
% Text Node
\draw (318,90) node [anchor=north west][inner sep=0.75pt]   [align=left] {{\scriptsize $E_0$}};
% Text Node
\draw (348,88.5) node [anchor=north west][inner sep=0.75pt]   [align=left] {{\scriptsize $E_3$}};
% Text Node
\draw (378.5,89) node [anchor=north west][inner sep=0.75pt]   [align=left] {{\scriptsize $E_4$}};
% Text Node
\draw (266.5,157.5) node [anchor=north west][inner sep=0.75pt]   [align=left] {{\scriptsize $E_5$}};
% Text Node
\draw (371,158) node [anchor=north west][inner sep=0.75pt]   [align=left] {{\scriptsize $E_6$}};
\end{tikzpicture}
\end{center}
Let $\cS$ be the associated semigroup. Note that the above graph is a `doubled graph' in the sense of \cite{flat}, and $\cS$ is the same as the one associated with the Seifert structure $Sf=(-1; (3,1),(7,4))$. In fact, this is exactly the  semigroup $G(3,5,7)$ generated by $3$, $5$ and $7$. The set of gaps is $\{1,2,4\}$, hence the genus is equal to $3$, a fact that
will be verified next using the formula (\ref{eq:genus}).

Let us denote a cycle by $l'=(l'_0,\dots, l'_6)$ where $l'_i$ is the $E_i$-coefficient. Then the important cycles are as follows: \\
$E_0^*=(21/4,3,3/4,3,3/4,7/4,7/4)$, $Z_K=(13/2,4,3/2,4,3/2,5/2,5/2)$, \\
$r_{[Z_K]+h_0}=(3/4,0,1/4,0,1/4,1/4,1/4)$ and as the result of the generalized Laufer's algorithm (cf. \ref{ss:HrepLip}) we get $s_{[Z_K]+h_0}=(27/4,4,5/4,4,5/4,9/4,9/4)$. Moreover, this also implies $\chi(s_{[Z_K]+h_0})-\chi(r_{[Z_K]+h_0})=0$. In fact, $s_{[Z_K]+h_0}=E^*_0+E^*_2+E^*_4$, thus it will be also useful to have $E_2^*=(3/4,4/7,11/28,3/7,3/28,1/4,1/4)$ and similarly $E_4^*=(3/4,3/7,3/28,4/7,11/28,1/4,1/4)$. 

The explicit calculation of the equivariant geometric genera using, e.g., the formulas of \cite[sect. 6]{LN14} (see also \cite[pg. 241]{nbook}) provides $(p_g)_{[Z_K+h_0]}=0$ and $p_g=3$. Hence, $(X,0)$ is not rational. 

Therefore, according to (\ref{eq:genus}) the genus of the semigroup is given by $\chi_{[Z_K]+h_0}(Z_K+E_0^*-s_{[Z_K]+h_0})-3$, where 
\begin{multline*}
\chi_{[Z_K]+h_0}(Z_K+E_0^*-s_{[Z_K]+h_0})=(-2E_0^*-E_2^*-E_4^*,Z_K-E_2^*-E_4^*)/2\\
=(2(Z_K-E_2^*-E_4^*)_0+(Z_K-E_2^*-E_4^*)_2+(Z_K-E_2^*-E_4^*)_4)/2=(10+1+1)/2=6. 
\end{multline*}
Hence, we get $g(\cS)=3$.
\end{exam}

\section{When is a representable semigroups symmetric?}

\subsection{} 
Let us consider the situation and  notations of section \ref{s:genorb}. Additionally, 
However, we will  introduce some further invariants of Seifert 3-manifolds.

Let $Sf=(-b_0;(\alpha_i,\omega_i)_{i=1}^d)$ be the Seifert invariants of $(X,0)$ and $\Gamma$ be the corresponding canonical equivariant resolution graph. The {\it orbifold Euler number}
is defined as $e=-b_0+\sum_i\omega_i/\alpha_i$. The negative definiteness of the intersection form is equivalent with  $e<0$. We also write  $\alpha:=\mathrm{lcm}(\alpha_1,\ldots,\alpha_d)$. Let $|H|$ be the order of $H$, and
let  $\mathfrak{o}$ be the order of the class of $E^*_0$ in $H$. 
Then, by \cite{neumann.abel}, 
\begin{equation}\label{eq:sei2}
 |H|=\alpha_1\cdots\alpha_d|e|, \ \ \ \ \mathfrak{o}=\alpha|e|.
\end{equation}
Let $\gamma$ be the $E_0$-coefficient of the rational cycle $Z_K-E$. This combinatorial number has several interpretations -- eg. exponent of the weighted homogeneous germ, log-discrepancy of $E_0$, Goto-Watanabe $a$-invariant, orbifold Euler characteristic, etc. -- since it has a central importance with respect to the properties of $(X,0)$. It also equals 
$$\gamma=\frac{1}{|e|}\cdot \Big( d-2-\sum_{i=1}^d \frac{1}{\alpha_i}\Big)\in \mathbb{Q}.$$

\subsection{} In subsection \ref{ss:4.2} we already mentioned that the semigroup $\cS_{\Gamma}$ associated with $(X,0)$ and with  the central vertex $E_0$ (or, with the $\mathbb{C}^*$-action) 
is topological: it can be determined completely from $\Gamma$. 

Let $c$ be the conductor of $\cS_{\Gamma}$, that is, $c$ is the smallest integer such that 
$c+\mathbb{N}\subset \cS_\Gamma$. A numerical semigroup $\cS\subset \mathbb{N}$ is {\it symmetric}
whenever  $s\in \cS$ if and only if $c-1-s\not\in\cS$. 

In the sequel, we wish to detect topologically whether $\cS_\Gamma$ is symmetric or not.

Recall that by Theorem \ref{mainsmgps} we have $\cS_\Gamma=\cS_{C^{gen}}$, that is, $\cS_\Gamma$
is realized as the semigroup of an abstract reduced irreducible curve singularity. 
In such a generality, we have the following: For an irreducible curve singularity $(C,o)$ with semigroup $\cS$  and delta invariant $\delta$ the following facts are equivalent \cite{Kunz, Delgado}:
\begin{equation}\label{eq:GORE} \mbox{$\cS$ is symmetric \ $\Leftrightarrow$ \ $(C,o)$ is Gorenstein  \ $\Leftrightarrow$ \
$c=2\delta$}. \end{equation}
Having in mind the identity  $\cS_\Gamma=\cS_{C^{gen}}$, we can ask if the symmetry of $\cS_\Gamma$ can be related with the Gorenstein property of $(X,0)$. The answer in general is no.

Indeed, the Gorenstein property of $(X,0)$ does not imply the symmetry of $\cS_\Gamma$: 
 Example 7.1.3 of \cite{stronglyflat} 
 is a Gorenstein singularity with Seifert invariants $$(-2; (2,1), (2,1), (3,1),(3,1), (7,1), (7,1), (84,1))$$ whose semigroup is not symmetric. 

On the other hand, a singularity with Seifert invariants $(-2; (3,1), (3,1), (3,1))$
 has a symmetric semigroup, namely $G(2,3)$, however, it is not even  numerically Gorenstein. 
% In particular, $(X,0) $ being a numerically Gorenstein singularity
% (a topological property of $\Gamma$) is not enough to guarantee the symmetry of $\cS_\Gamma$.

\subsection{} In the next characterization, we will use the last property of (\ref{eq:GORE}), 
applied for $(C^{gen},0)$.

 By \cite{stronglyflat} we know that the conductor $c$ of $\cS_\Gamma$ can be expressed as 
\begin{equation}\label{eq:COND}
    c=(Z_K+E_0^*-s_{[Z_K]+h_0})_0=\gamma+1+\frac{1}{|e|}-\check{s},
\end{equation}
where $( \ )_{0}$ refers to the $E_0$-coefficient of the corresponding cycle, and $\check{s}:=(s_{[Z_K]+h_0})_0$.

Now, using (\ref{eq:COND}) and $\chi(E^*_0)=-(E^*_0, E^*_0-Z_K)/2= 
-Z_{K,0}/2+1/(2|e|)$ we get 
\begin{equation}\begin{split}
\chi_{[Z_K]+h_0}(Z_K+E_0^*-s_{[Z_K]+h_0})&= \chi(Z_K+E^*_0)-\chi(s_{[Z_K+h_0]})\\
&=\frac{c}{2}+\frac{\check{s}}{2}-\chi(s_{[Z_K]+h_0}).
\end{split}
\end{equation}
This rephrases (\ref{eq:genus}) as 
\begin{equation}\label{eq:genus2}
g(\cS_\Gamma)=\frac{c}{2}+\frac{\check{s}}{2}-\chi(r_{[Z_K]+h_0})+(p_g)_{[Z_K]+h_0}-p_g.
\end{equation}
Now, by (\ref{eq:GORE}) $\cS_\Gamma$ is symmetric if and only if $g(\cS_\Gamma)=\delta=c/2$, 
hence (\ref{eq:genus2}) reads as follows. 
\begin{cor} A representable semigroup $\cS_\Gamma$ is symmetric if and only if $\Gamma$ satisfies 
\begin{equation}
p_g-(p_g)_{[Z_K]+h_0}=\frac{\check{s}}{2}-\chi(r_{[Z_K]+h_0}), 
\end{equation}
or, equivalently
\begin{equation}
p_g=h^1(\cO_{{\tilde X}}(-s_{[Z_K]+h}))+\check{s}/2 - \chi(s_{[Z_K]+h_0}).
\end{equation}
\end{cor}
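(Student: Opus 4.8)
The strategy is to read off the characterization directly from the rephrased genus formula (\ref{eq:genus2}) together with the curve-theoretic symmetry criterion (\ref{eq:GORE}). First I would invoke Theorem \ref{mainsmgps}, which identifies $\cS_\Gamma$ with the value semigroup $\cS_{C^{gen}}$ of the abstract irreducible curve germ $(C^{gen},0)$; since the delta invariant of an irreducible curve equals the genus of its value semigroup, this gives $g(\cS_\Gamma)=\delta(C^{gen},0)$. Applying (\ref{eq:GORE}) to $(C^{gen},0)$, the symmetry of $\cS_\Gamma$ is then equivalent to $c=2\delta(C^{gen},0)=2g(\cS_\Gamma)$, i.e. to $g(\cS_\Gamma)=c/2$.

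Next I would substitute the expression (\ref{eq:genus2}), namely $g(\cS_\Gamma)=\tfrac{c}{2}+\tfrac{\check s}{2}-\chi(r_{[Z_K]+h_0})+(p_g)_{[Z_K]+h_0}-p_g$, into the equality $g(\cS_\Gamma)=c/2$. The two occurrences of $c/2$ cancel, and what remains is exactly $p_g-(p_g)_{[Z_K]+h_0}=\tfrac{\check s}{2}-\chi(r_{[Z_K]+h_0})$, the first stated condition.

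To obtain the equivalent form I would then use identity (\ref{eq:rhsh}) with $h=[Z_K]+h_0$, that is $(p_g)_{[Z_K]+h_0}-\chi(r_{[Z_K]+h_0})=h^1(\cO_{\tilde X}(-s_{[Z_K]+h_0}))-\chi(s_{[Z_K]+h_0})$. Inserting this into the first condition and cancelling $\chi(r_{[Z_K]+h_0})$ from both sides yields $p_g=h^1(\cO_{\tilde X}(-s_{[Z_K]+h_0}))+\tfrac{\check s}{2}-\chi(s_{[Z_K]+h_0})$, as claimed.

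I do not expect a genuine obstacle here: once (\ref{eq:genus2}), (\ref{eq:GORE}) and (\ref{eq:rhsh}) are in hand the argument is a short linear rearrangement, with no new geometric input. The one point worth a remark is that (\ref{eq:genus2}) itself rests (through Corollary \ref{cor:4.3} and Theorem \ref{thm:delta}(b)) on $(X,0)$ being $\mathbb{Q}$-Gorenstein; this is automatic for normal weighted homogeneous surface germs, so every quantity entering the criterion is a topological invariant of $\Gamma$ and the characterization is genuinely combinatorial.
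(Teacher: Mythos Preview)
Your proposal is correct and follows essentially the same route as the paper: the paper also deduces the criterion by combining (\ref{eq:GORE}) with (\ref{eq:genus2}) to reduce symmetry to $g(\cS_\Gamma)=c/2$ and then cancel, leaving the first displayed condition. Your explicit invocation of (\ref{eq:rhsh}) to pass to the second form, and your observation that the $\mathbb{Q}$-Gorenstein hypothesis is automatic in the weighted homogeneous setting, are spelled out more fully than in the paper but add nothing new to the argument.
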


\begin{exam}\label{ex:rat}
Assume that the ambient space $(X,0)$ is a rational singularity with a $\mathbb{C}^*$-action. Let $\Gamma$ be its canonical equivariant resolution graph. In this case, one has $(p_g)_{[Z_K]+h_0}=\chi(r_{[Z_K]+h_0})-\chi(s_{[Z_K]+h_0})$, hence 
\begin{equation}\label{eq:genusrat}
g(\cS_\Gamma)=\chi_{[Z_K]+h_0}(Z_K+E_0^*-s_{[Z_K]+h_0})=\frac{c}{2}+\frac{\check{s}}{2}-\chi(s_{[Z_K]+h_0}).
\end{equation}
In particular, $\cS_\Gamma$ is symmetric if and only if 
 $\chi(s_{[Z_K]+h_0})=\check{s}/2$.
(If $b_0\geq d$ then we know by \cite[6.2.2]{stronglyflat} that $\cS_\Gamma=\mathbb{N}$,
hence $g(\cS_\Gamma)=0$ too.  In case $b_0<d$, the conductor satisfies $c\geq 2$ and the genus is also nonzero.)

In fact, the symmetry (or  $\chi(s_{[Z_K]+h_0})=\check{s}/2$)
is equivalent to $(Z_K+E_0^*-s_{[Z_K]+h_0}, s_{[Z_K]+h_0})=0$. Since $s_{[Z_K]+h_0} \in \cS'_{[Z_K]+h_0}$, and $Z_K+E_0^*-s_{[Z_K]+h_0}$ is positive 
%(Remark \ref{rem:wc}) 
or the zero cycle, we deduce that in this case
\begin{center}
$\cS$ is symmetric if and only if $b_0\geq d$ or $[Z_K+E^*_0]=0$.
\end{center}
In particular, this is the case when we consider any chosen $\mathbb{C}^*$-action on $(\mathbb{C}^2,0)$. In fact, the non-trivial cases are the $G(p,q)$ semigroups (cf. \cite[3.2]{flat}) that are indeed symmetric. 
\end{exam}

In general, from (\ref{eq:genus}) follows that condition $[Z_K+E_0^*]=0$ implies symmetry. In particular, this is the case when $\Gamma$ is numerically Gorenstein (that is, $[Z_K]=0$) and satisfies $\mathfrak{o}=1$, as was shown in \cite[Prop.2.3.9]{flat}. However, the inverse implication is not true. In fact, if we consider the graph associated with $Sf=(-2;2\times (2,1), 2\times (3,1))$, then the semigroup is $G(2,3)$ which is indeed symmetric, but $[Z_K+E_0^*]\neq 0$ in $H$.

%\bibliographystyle{amsplain}
%\bibliography{papers}
\providecommand{\bysame}{\leavevmode\hbox to3em{\hrulefill}\thinspace}
\providecommand{\MR}{\relax\ifhmode\unskip\space\fi MR }
% \MRhref is called by the amsart/book/proc definition of \MR.
\providecommand{\MRhref}[2]{%
  \href{http://www.ams.org/mathscinet-getitem?mr=#1}{#2}
}

\end{document}